     \def\section{\@startsection{section}{1}%
     \z@{.7\linespacing\@plus\linespacing}{.5\linespacing}%
     {\bfseries
     \centering
     }}
     \def\@secnumfont{\bfseries}
\newtheorem{theorem}{Theorem}[section]
\newtheorem{proposition}[theorem]{Proposition}
\theoremstyle{definition}
\theoremstyle{remark}
\numberwithin{equation}{section}
\newcommand{\LP}{L\'{e}vy process}
\newcommand{\R}{\mathbb{R}}
\newcommand{\nN}{n \in \mathbb{N}}
\newcommand{\C}{\mathbb{C}}
\newcommand{\cadlag}{c\`adl\`ag}
\newcommand{\g}{\mathfrak{g}}
\newcommand{\bean}{\begin{eqnarray*}}
\newcommand{\eean}{\end{eqnarray*}}
\newcommand{\la}{\langle}
\newcommand{\ra}{\rangle}
\newcommand{\G}{\widehat{G}}
\DeclareMathOperator*{\esssup}{ess\,sup}
\newcommand{\fp}{\mathfrak{p}}
\newcommand{\fa}{\mathfrak{a}}
\newcommand{\fn}{\mathfrak{n}}
\begin{document}

\date{}

\title[Convolution Semigroups Revisited]{Convolution Semigroups of Probability Measures on Gelfand Pairs, Revisited}

\author{David Applebaum}

\address{School of Mathematics and Statistics, University of
Sheffield, Hicks Building, Hounsfield Road, Sheffield,
England, S3 7RH}

\email{D.Applebaum@sheffield.ac.uk}

\subjclass[2000] {Primary 60B15; Secondary 60G51, 43A30, 43A90, 22E46}

\keywords{Gelfand pair, convolution semigroup, spherical function, spherical transform, Plancherel measure, generalised positive definite function, generalised negative definite function, Berg P--D function, Berg N--D function, L\'{e}vy--Khintchine formula, Lie group, Lie algebra, semisimple, symmetric space, stochastic differential equation}

\begin{abstract}
Our goal is to find classes of convolution semigroups on Lie groups $G$ that give rise to interesting processes in symmetric spaces $G/K$. The $K$--bi--invariant convolution semigroups are a well--studied example. An appealing direction for the next step is to generalise to right $K$--invariant convolution semigroups, but recent work of Liao has shown that these are in one--to--one correspondence with $K$--bi--invariant convolution semigroups. We investigate a weaker notion of right $K$--invariance, but show that this is, in fact, the same as the usual notion. Another possible approach is to use generalised notions of negative definite functions, but this also leads to nothing new. We finally find an interesting class of convolution semigroups that are obtained by making use of the Cartan decomposition of a semisimple Lie group, and the solution of certain stochastic differential equations. Examples suggest that these are well--suited for generating random motion along geodesics in symmetric spaces.
\end{abstract}

\maketitle

\section{Introduction}

The category of Gelfand pairs is a beautiful context in which to explore probabilistic ideas. It provides an elegant mathematical formalism, and contains many important examples, not least the globally Riemannian symmetric spaces and the homogeneous trees. Until quite recently, most studies of probability measures on a Gelfand pair $(G, K)$ have focussed on the $K$ bi--invariant case. In the context that will concern us here, where the object of study is a convolution semigroup of such measures, Herbert Heyer's paper \cite{Hey1} presents a masterly survey of the main developments of the theory, up to and including the early 1980s.

In fact, right $K$--invariant measures on $G$ are natural objects of study as they are in one--to--one correspondence with measures on the homogeneous space $N:=G/K$. The additional assumption of left $K$--invariance certainly makes the theory extremely elegant, as it enables the use of the beautiful concept of spherical function, as introduced by Harish--Chandra; thus we may study measures in the ``Fourier picture'', using the ``characteristic function'' given by the spherical transform. Such an approach led to a specific L\'{e}vy-Khintchine formula, classifying  infinitely divisible probability measures on non--compact symmetric spaces, in the pioneering work of Gangolli \cite{Gang1} (see also \cite{LW} for a more recent treatment). Another key consequence of the $K$--bi--invariance assumption is that it corresponds precisely to semigroups/Dirichlet forms that are $G$--invariant on $M$ (see Theorem 4.1 in \cite{Berg}), with respect to the natural group action; so that if $N$ is a symmetric space, then the induced semigroup commutes with all isometries.

In \cite{AD}, Dooley and the author developed a L\'{e}vy-Khintchine formula on non-compact semi-simple Lie groups, using a matrix-valued generalisation of

 \noindent Harish--Chandra's spherical functions. In the last part of the paper, an attempt was made to project this to non--compact symmetric spaces when the convolution semigroup comprises measures that are only right $K$--invariant. Since then Liao has shown \cite{L1} that all right $K$--invariant convolution semigroups are in fact $K$--bi--invariant. In the current paper, we will ask the question -- are there any natural classes of convolution semigroups, other than the $K$--bi--invariant ones, that give rise to interesting classes of Markov processes on $N$?

 Since right $K$--invariance is so natural and attractive, we begin by asking

 \noindent whether some weaker notion of convolution may lead to any interesting conclusions. We present a candidate, but find that it once again leads to $K$--bi--invariance. Our second approach is to generalise the ideas of positive--definite and negative--definite function on the space $\mathcal{P}$ of positive--definite spherical functions, which were first introduced, in the bi--invariant case, by Berg in \cite{Berg}. The functions that Berg considered were complex--valued, but when we drop left $K$--invariance, we find that their natural generalisations must be vector--valued, and this requires us to make use of certain direct integrals of Hilbert spaces over the space $\mathcal{P}$. Once again, however, we show that these objects lead to nothing new. Finally, in the semisimple Lie group case, we introduce a promising class of convolution semigroups, which are obtained by solving stochastic differential equations (SDEs). These SDEs are driven by vector fields that live in that part of the Cartan decomposition of the Lie algebra of $G$ which projects non--trivially to $N$. Although we haven't developed the ideas very far herein, this concept seems more promising. In particular there are already some interesting and non--trivial examples, which involve randomising the notion of geodesic.

The organisation of the paper is as follows. Section 2 is an introduction that briefly summarises all the harmonic analysis on Gelfand pairs that we will need in the sequel. In section 3, we discuss various types of convolution semigroup, while section 4 describes the generalised notions of vector--valued positive--definite and negative--definite function. In section 5, in the Lie group/symmetric space setting, we review Hunt's theorem and the L\'{e}vy-Khintchine formula for convolution semigroups, and section 6 puts the main results of \cite{AD} within a more general framework. Finally in section 7, we present the new class of convolution semigroups mentioned above.
\vspace{5pt}

{\bf Notation. } If $G$ is a locally compact Hausdorff group then $\mathcal{B}(G)$ is its Borel $\sigma$--algebra, and $C_{u}(G)$ is the Banach space (with respect to the supremum norm) of all real--valued, bounded, uniformly continuous functions (with respect to the left uniform structure)\footnote{In this context, uniform continuity means that given any $\epsilon > 0$, there exists a neighbourhood $U$ of $e$, so that $\sup_{x \in G}|f(g^{-1}x) - f(x)| < \epsilon$ for all $g \in U$.} defined on $G$. The closed subspace of $C_{u}(G)$ comprising functions having compact support is denoted by $C_{c}(G)$. If $\mu$ is a measure on $G$, then $\tilde{\mu}$ is the reversed measure, i.e. $\tilde{\mu}(A) = \mu(A^{-1})$, for all $A \in \mathcal{B}(G)$. We recall that if $\mu_{1}$ and $\mu_{2}$ are two finite measures on $(G, \mathcal{B}(G))$, then their convolution $\mu_{1} * \mu_{2}$ is the unique finite measure on $(G, \mathcal{B}(G))$ so that
 $$ \int_{G}f(g)(\mu_{1} * \mu_{2})(dg) = \int_{G}\int_{G}f(gh)\mu_{1}(dg)\mu_{2}(dh),$$
 for all $f \in C_{c}(G)$. If $e$ is the neutral element in $G$, then $\delta_{e}$ will denote the Dirac measure at $e$. The set $\widehat{G}$ comprises all equivalence classes (with respect to unitary conjugation) of irreducible unitary representations of $G$, acting in some complex separable Hilbert space. If $E$ is a real or complex Banach space, then $B(E)$ will denote the algebra of all bounded linear operators on $E$. If $\mathcal{F}(G)$ is some space of functions on $G$, and $K$ is a closed subgroup of $G$, we write $\mathcal{F}_{K}(G)$ for the subspace that comprising those functions that are right $K$--invariant, and we will naturally identify this subspace with the corresponding space $\mathcal{F}(G/K)$ of functions on the homogeneous space $G/K$ of left cosets.
 We choose once and for all a left--invariant Haar measure on $G$, which is denoted by $dg$ within integrals. Haar measure on compact subgroups is always normalised to have total mass one.

\section{Gelfand Pairs and Spherical Functions}

Let $(G,K)$ be a {\it Gelfand pair}, so that $G$ is a locally compact group with neutral element $e$, $K$ is a compact subgroup, and the Banach algebra (with respect to convolution) $L^{1}(K\backslash G/K)$ of $K$--bi--invariant functions is commutative. We will summarise basic facts that we will need about these structures in this section. Most of this is based on Wolf \cite{Wol}, but see also Dieudonn\'{e} \cite{Dieu}. Throughout this paper, we will, where convenient, identify functions/measures/distributions on the homogeneous space $G/K$ with right $K$--invariant functions/ measures/ distributions on $G$. We emphasise that we do not assume that left $K$--invariance also holds.

If $(G, K)$ is a Gelfand pair, then Haar measure on $G$ is unimodular. Every continuous multiplicative function from $L^{1}(K\backslash G/K)$ to $\C$ is of the form $f \rightarrow \widehat{f}(\omega)$, where $\widehat{f}(\omega) = \int_{G}f(g)\omega(g^{-1})dg$. The mapping $\omega:G \rightarrow \C$ is called a (bounded) {\it spherical function}. In general a spherical function on $(G,K)$ is characterised by the property that it is a non-trivial continuous function such that for all $g,h \in G$,

\begin{equation} \label{spher}
\int_{K}\omega(gkh)dk = \omega(g)\omega(h),
\end{equation}

The set $S(G,K)$ of all bounded spherical functions on $(G,K)$ is the maximal ideal space (or spectrum) of the algebra $L^{1}(K\backslash G/K)$. It is locally compact under the weak-$*$-topology (and compact if $L^{1}(K\backslash G/K)$ is unital). The corresponding Gelfand transform is the mapping $f \rightarrow \widehat{f}$ (usually called the {\it spherical transform} in this context). Let $\mathcal{P}:= \mathcal{P}(S,K)$ be the closed subspace of $S(G,K)$ comprising {\it positive definite} spherical functions. For each $\omega \in \mathcal{P}$, there exists a triple $(H_{\omega}, \pi_{\omega}, u_{\omega})$, where $H_{\omega}$ is a complex Hilbert space, $\pi_{\omega}$ is a unitary representation of $G$ in $H_{\omega}$, and $u_{\omega} \in H_{\omega}$ is a cyclic vector, so that for all $g \in G$,
$$ \omega(g) = \la u_{\omega}, \pi_{\omega}(g)u_{\omega} \ra.$$ Since $\omega(e) = 1$, $u_{\omega}$ is a unit vector for all $\omega \in \mathcal{P}$;
moreover the representation $\pi_{\omega}$ is irreducible, and {\it spherical} in that $\pi_{\omega}(k)u_{\omega} = u_{\omega}$, for all $k \in K$. Finally, we have dim$(H_{\omega}^{K}) = 1$, where $H_{\omega}^{K}: = \{v \in H_{\omega}, \pi_{\omega}(k)v = v~\mbox{for all}~k \in K\}.$

There is a unique Radon measure $\rho$ on $\mathcal{P}$, called the {\it Plancherel measure}, such that for all bounded functions $f \in L^{1}(K\backslash G/K), g \in G$,
$$ f(g) = \int_{\mathcal{P}}\widehat{f}(\omega)\omega(g)\rho(d\omega).$$

The support of $\rho$ is the maximal ideal space $\mathcal{P}_{+} \subset \mathcal{P}$ of the $C^{*}$-algebra $C^{*}(G, K)$, which is the uniform closure of the range of the representation $\psi$ in $B(L^{2}(K\backslash G/K))$ whose action is given by $\psi(f)h = f * h$, for all $f \in L^{1}(K\backslash G/K),$

\noindent $h \in L^{2}(K\backslash G/K)$.

The Plancherel theorem states that if $f \in L^{1}(K\backslash G/K) \cap L^{2}(K\backslash G/K)$, then $\widehat{f} \in L^{2}(P, \rho)$ and $||f||_{2} = ||\widehat{f}||_{2}$. This isometry extends uniquely to a unitary isomorphism between $L^{2}(K\backslash G/K)$ and $L^{2}(P, \rho)$.

The theory described above is essential for the analysis of $K$--bi--invariant functions/measures/distributions on $G$. To work with objects that are only right $K$--invariant we must introduce the direct integrals

\noindent $\mathcal{H}_{p}(G,K)$, for $1 \leq p \leq \infty$. These spaces comprise sections $\Psi: \mathcal{P} \rightarrow \bigcup_{\omega \in \mathcal{P}} H_{\omega}$ for which $\Psi(\omega) \in H_{\omega}$ for all $\omega \in \mathcal{P}$, such that for $1 \leq p < \infty$,
$$  ||\Psi||_{\mathcal{H}_{p}}: = \left(\int_{P}||\Psi(\omega)||_{H_{\omega}}^{p}\rho(d\omega)\right)^{\frac{1}{p}} < \infty,$$
and for $p = \infty, ||\Psi||_{\mathcal{H}_{\infty}}:= \esssup_{\omega \in \mathcal{P}}||\Psi(\omega)||_{H_{\omega}}.$

\noindent For $1 \leq p \leq \infty$, let $\mathcal{H}_{p}^{(0)}(G,K)$ be the subspace of $\mathcal{H}_{p}(G,K)$ comprising sections $\Psi$ for which $\Psi(\omega) \in H_{\omega}^{K}$ for all $\omega \in \mathcal{P}$. We will find it convenient in the sequel to regard $L^{p}(\mathcal{P}, \rho)$ as a subspace of $\mathcal{H}_{p}(G,K)$, by observing that it is precisely $\mathcal{H}_{p}^{(0)}(G,K)$.

\noindent $\mathcal{H}_{p}(G,K)$ is a Banach space, while $\mathcal{H}_{2}(G,K)$ is a Hilbert space with inner product
$$ \la \Psi_{1}, \Psi_{2} \ra_{\mathcal{H}_{2}} = \int_\mathcal{P} \la \Psi_{1}(\omega), \Psi_{2}(\omega) \ra_{H_{\omega}}\rho(d\omega),$$
for all $\Psi_{1}, \Psi_{2} \in \mathcal{H}_{2}(G,K)$.

We introduce the {\it Fourier cotransform} for any unitary representation $\pi$ of $G$, $\pi(f): = \int_{G}f(g)\pi(g)dg$, where $f \in L^{1}(G/K)$. We will need the scalar Fourier inversion formula for bounded $f \in L^{1}(G/K), g \in G$:

\begin{equation} \label{sFi}
f(g) = \int_{\mathcal{P}}\la \pi_{\omega}(f)u_{\omega}, \pi_{\omega}(g)u_{\omega} \ra \rho(d\omega),
\end{equation}
The {\it vector-valued Fourier transform} is the mapping

\noindent $\mathcal{F}:L^{1}(G/K) \rightarrow \mathcal{H}^{\infty}(G,K)$ defined for each $f \in L^{1}(G/K), \omega \in \mathcal{P}$ by
$$ (\mathcal{F} f)(\omega) = \pi_{\omega}(f) u_{\omega}.$$

The vector-valued Fourier inversion formula is a minor reformulation of (\ref{sFi}). It states that if $f \in L^{1}(G/K)$ is bounded, then $\mathcal{F}f \in \mathcal{H}_{1}(G, K)$ and for all $g \in G$:
$$ f(g) = \int_{\mathcal{P}}\la (\mathcal{F} f)(\omega), \pi_{\omega}(g)u_{\omega} \ra \rho(d\omega).$$

There is also a Plancherel formula within this context: if $f \in L^{1}(G/K) \cap L^{2}(G/K)$, then $\mathcal{F}f \in \mathcal{H}_{2}(G, K)$ and
\begin{equation} \label{Planch}
||\mathcal{F}f||_{\mathcal{H}_{2}(G, K)} = ||f||_{L^{2}(G/K)},
\end{equation}
and the action of $\mathcal{F}$ extends to a unitary isomorphism between $L^{2}(G/K)$ and $\mathcal{H}_{2}(G,K)$.

\section{Restricted Convolution Semigroups}

A family $(\mu_{t}, t \geq 0)$ of probability measures on $(G, \mathcal{B}(G))$ is said to be a {\it convolution semigroup} if $\mu_{s+t} = \mu_{s} *\mu_{t}$ for all $s,t \geq 0$. Then $\mu_{0}$ is an idempotent measure and so must be the normalised Haar measure of a compact subgroup of $G$ (see \cite{He1}, Theorem 1.2.10, p.34). A convolution semigroup is said to be {\it continuous} if $\mbox{vague}-\lim_{t \rightarrow 0}\mu_{t} = \mu_{0}$. It then follows that it is vaguely continuous on $[0, \infty)$. A continuous convolution semigroup is said to be {\it standard} if $\mu_{0} = \delta_{e}$.  If $(\mu_{t}, t \geq 0)$ is standard, then  $(P_{t}, t \geq 0)$ is a $C_{0}$-contraction semigroup on $C_{u}(G)$, where

\begin{equation}  \label{semi}
P_{t}f(g) = \int_{G}f(gh)\mu_{t}(dh), ~\mbox{for all}~ t \geq 0, f \in C_{u}(G), g \in G.
\end{equation}

It is precisely the standard continuous convolution semigroups that are the laws of L\'{e}vy processes in Lie groups (see e.g. \cite{Liao}).

Now let us return to Gelfand pairs $(G,K)$. Let $\mathcal{M}_{K}(G)$ be the space of all right $K$--invariant Radon probability measures on $G$. We say that a continuous convolution semigroup $(\mu_{t}, t \geq 0)$ is right $K$--invariant, if $\mu_{t} \in \mathcal{M}_{K}(G)$ for all $t \geq 0$. In that case, $\mu_{0}$ is normalised Haar measure on $K$, and then $\mu_{t}$ is $K$--bi--invariant for all $t \geq 0$, as is shown in Proposition 2.1 of \cite{L1}. Indeed since for all $t \geq 0, \mu_{t} = \mu_{0} * \mu_{t}$, left $K$--invariance of $\mu_{t}$ follows from that of $\mu_{0}$.

It would be desirable to be able to study families of measures on $G$ that are right $K$--invariant, but not necessarily $K$--bi--invariant, and which capture the essential features of a convolution semigroup that we need, within a right $K$--invariant framework. Here is a plausible candidate. A family $(\mu_{t}, t \geq 0)$ of probability measures on $(G, \mathcal{B}(G))$ is said to be a {\it right $K$-invariant restricted convolution semigroup} if

\begin{enumerate}
\item[(A1)] $\mu_{t}$ is right $K$--invariant for all $t > 0$,
\item[(A2)] $\mu_{0} = \delta_{e}$,
\item[(A3)] $\int_{G}f(g)\mu_{s+t}(dg) = \int_{G}\int_{G}f(gh)\mu_{s}(dg)\mu_{t}(dh)$, for all $f \in C_{u}(G/K)$,
\item[(A4)] $\lim_{t \rightarrow 0}\int_{G}f(\sigma)\mu_{t}(d\sigma) = f(e)$, for all $f \in C_{c}(G/K)$.
\end{enumerate}

\pagebreak

{\bf Notes.} \begin{enumerate}
              \item In (A4) we can replace $f$ with any bounded continuous

              right $K$--invariant function, by the argument of Theorem 1.1.9 in \cite{He1}.
              \item The term ``restricted convolution semigroup'' is a misnomer, as (A3) appears to be too weak to define a convolution of measures in the usual sense (but see Theorem \ref{bad} below). Nonetheless there is a more general framework that these ideas fit into. We regard $C_{u}(G/K)$ as a $*$--bialgebra where the comultiplation

                  $\Delta: C_{u}(G/K) \rightarrow C_{u}(G/K) \otimes C_{u}(G/K)$ is given by
              $$ \Delta f(g, h) = \int_{K}f(gkh)dk;$$
              then we can interpret (A3) as a convolution of states on $C_{u}(G/K)$, as described in e.g. \cite{Fra}.
            \end{enumerate}

Note that $P_{t}$, as defined in (\ref{semi}) does not preserve the space $C_{u}(G/K)$. Instead we define the family of operators $T_{t}:C_{u}(G/K) \rightarrow C_{u}(G/K)$, for $t \geq 0$ by

\begin{equation}  \label{semi1}
T_{t}f(g) = \int_{G}\int_{K}f(gkh)\mu_{t}(dh)dk, ~\mbox{for all}~ t \geq 0, f \in C_{u}(G), g \in G.
\end{equation}

Then by using (A1) to (A4), we can verify that $(T_{t}, t \geq 0)$ is a $C_{0}$-contraction semigroup on $C_{u}(G/K)$. Indeed, for $s, t \geq 0$, to verify the semigroup property, we observe that by right $K$--invariance of $\mu_{s}$, Fubini's theorem, and (A3):

\bean T_{s}(T_{t}f)(g) & = & \int_{G}\int_{K}(T_{t}f)(gkh)\mu_{s}(dh)dk\\
& = & \int_{G}\int_{K}\int_{G}\int_{K}f(gkhk^{\prime}h^{\prime})\mu_{t}(dh^{\prime})dk^{\prime}\mu_{s}(dh)dk\\
& = & \int_{G}\int_{K}\int_{G}f(gkhh^{\prime})\mu_{t}(dh^{\prime})\mu_{s}(dh)dk\\
& = & \int_{G}\int_{K}f(gkh)\mu_{s+t}(dh)dk\\
& = & T_{s+t}f(g). \eean

Although they appear to be promising objects, as pointed out to the author by Ming Liao, restricted convolution semigroups are just continuous $K$--bi-invariant convolution semigroups, and so there is nothing new in this idea. We prove this as follows.

\begin{theorem} \label{bad} Every right $K$--invariant restricted convolution semigroup is a continuous $K$--bi--invariant convolution semigroup.

\end{theorem}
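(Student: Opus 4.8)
The plan is to prove the statement in three movements: first upgrade the weak relation (A3) to a genuine convolution law, then extract left $K$--invariance from the representation theory of the Gelfand pair, and finally check continuity. The crux is the second movement, because Liao's one--line argument (left $K$--invariance of $\mu_{t}$ from that of $\mu_{0}$ via $\mu_{t} = \mu_{0}*\mu_{t}$) is unavailable here: condition (A2) makes $\mu_{0} = \delta_{e}$, which is \emph{not} left $K$--invariant, rather than normalised Haar measure $\nu_{K}$ on $K$.

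First I would show that (A3) forces $\mu_{s+t} = \mu_{s}*\mu_{t}$ as measures on $G$ for all $s,t \geq 0$. Both measures are right $K$--invariant ($\mu_{s}*\mu_{t}$ because $\mu_{t}$ is, and $\mu_{s+t}$ by (A1)), so each agrees with its own right $K$--average. For $F \in C_{c}(G)$ the average $\bar{F}(g) = \int_{K}F(gk)\,dk$ lies in $C_{c}(G/K) \subset C_{u}(G/K)$, and right $K$--invariance gives $\mu_{s+t}(F) = \mu_{s+t}(\bar{F})$ and $(\mu_{s}*\mu_{t})(F) = (\mu_{s}*\mu_{t})(\bar{F})$; since (A3) equates the two sides on $\bar{F}$, the measures coincide. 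With $\mu_{0} = \delta_{e}$ the law extends trivially when $s$ or $t$ is $0$, so $(\mu_{t})$ is an honest convolution semigroup, and moreover $\mu_{s}*\mu_{t} = \mu_{s+t} = \mu_{t}*\mu_{s}$.

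For the heart of the matter, fix $\omega \in \mathcal{P}$ with spherical triple $(H_{\omega}, \pi_{\omega}, u_{\omega})$ and write $a_{t} := (\mathcal{F}\mu_{t})(\omega) = \pi_{\omega}(\mu_{t})u_{\omega}$. Since $\pi_{\omega}(\nu_{K}) = \int_{K}\pi_{\omega}(k)\,dk$ is the orthogonal projection $P_{\omega}$ onto $H_{\omega}^{K} = \C u_{\omega}$, right $K$--invariance in the form $\mu_{t}*\nu_{K} = \mu_{t}$ gives $\pi_{\omega}(\mu_{t}) = \pi_{\omega}(\mu_{t})P_{\omega}$, so $\pi_{\omega}(\mu_{t})$ is the rank--one operator $v \mapsto \la u_{\omega}, v\ra a_{t}$. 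Feeding this into $\pi_{\omega}(\mu_{s+t}) = \pi_{\omega}(\mu_{s})\pi_{\omega}(\mu_{t})$ yields
$$ a_{s+t} = c_{t}\,a_{s}, \qquad c_{t} := \la u_{\omega}, a_{t}\ra = \int_{G}\omega(g)\,\mu_{t}(dg).$$
Applying $\la u_{\omega}, \cdot\ra$ gives $c_{s+t} = c_{s}c_{t}$, and exchanging $s,t$ (legitimate by commutativity of the convolution above) gives $c_{t}a_{s} = c_{s}a_{t}$.

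Finally I would let $t \to 0^{+}$. The functions $g \mapsto \omega(g)$ and $g \mapsto \la v, \pi_{\omega}(g)u_{\omega}\ra$ are bounded, continuous and right $K$--invariant (the latter since $\pi_{\omega}(k)u_{\omega} = u_{\omega}$), so Note 1 extends (A4) to them, giving $c_{t} \to 1$ and $a_{t} \to u_{\omega}$ weakly. Multiplicativity forces $c_{t} \neq 0$ for all $t > 0$ (else $c_{t/2^{n}} = 0$ contradicts $c_{t/2^{n}} \to 1$); dividing $c_{t}a_{s} = c_{s}a_{t}$ by $c_{s}c_{t}$ shows $a_{s}/c_{s}$ is independent of $s$, and letting $s \to 0^{+}$ identifies it as $u_{\omega}$. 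Hence $(\mathcal{F}\mu_{t})(\omega) = c_{t}u_{\omega} \in \C u_{\omega}$ for every $\omega$, which says exactly $\mathcal{F}(\nu_{K}*\mu_{t}) = \mathcal{F}\mu_{t}$; injectivity of the vector--valued Fourier transform on right $K$--invariant measures (from the inversion and Plancherel statements of Section 2) then gives $\nu_{K}*\mu_{t} = \mu_{t}$, i.e. $K$--bi--invariance. Continuity is then routine: two--sided $K$--averaging reduces any $F \in C_{c}(G)$ to a $K$--bi--invariant function, and (A4) shows $\mu_{t} \to \nu_{K}$ vaguely, so the family is a continuous $K$--bi--invariant convolution semigroup, with the value at $0$ corrected from $\delta_{e}$ to $\nu_{K}$. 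I expect the main obstacle to be precisely this propagation of $K$--bi--invariance from the semigroup structure alone, the delicate technical points being the justification of the weak limits as $t \to 0^{+}$ out of the $C_{c}(G/K)$--hypothesis (A4) and the appeal to injectivity of $\mathcal{F}$ on measures.
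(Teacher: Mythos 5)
Your proof is correct, and while it opens the same way as the paper's, it settles the key point by a genuinely different route. Both arguments begin with the same first move: right $K$--averaging of test functions ($f\mapsto f^{K}$, $f^{K}(g)=\int_{K}f(gk)\,dk$) upgrades (A3) to the honest convolution identity $\mu_{s+t}=\mu_{s}*\mu_{t}$. From there the paper is much shorter: applying (A4) (extended as in Note 1) to $f^{K}$ shows that $\mu_{t}\to\nu_{K}$ vaguely as $t\to 0^{+}$, where $\nu_{K}$ is normalised Haar measure on $K$, so the family is a continuous right $K$--invariant convolution semigroup whose idempotent at $0$ is $\nu_{K}$, and Proposition 2.1 of \cite{L1} (in essence $\mu_{t}=\nu_{K}*\mu_{t}$) yields left $K$--invariance in one line. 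Your premise that Liao's argument is ``unavailable'' is therefore not quite right: (A2) fixes the value at $t=0$ but not the vague limit, and it is the limit $\nu_{K}$ (forcing the corrected value at $0$ that you also note) which makes that argument applicable. Your alternative --- the rank--one identity $\pi_{\omega}(\mu_{t})=\pi_{\omega}(\mu_{t})P_{\omega}$, the scalar relations $c_{s+t}=c_{s}c_{t}$ and $c_{t}a_{s}=c_{s}a_{t}$, the limits $c_{t}\to 1$, $a_{t}\to u_{\omega}$, and finally injectivity of $\mathcal{F}$ on $\mathcal{M}_{K}(G)$ --- is a valid, self--contained substitute; the injectivity you invoke is exactly Proposition \ref{uni}, which is proved later in the paper but independently of Theorem \ref{bad}, so there is no circularity. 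What your route buys is an explicit spectral explanation of \emph{why} each $\pi_{\omega}(\mu_{t})u_{\omega}$ must collapse onto $\C u_{\omega}$, without citing \cite{L1} and without the small limiting argument needed to pass from $\mu_{s}*\mu_{t}=\mu_{s+t}$ to $\nu_{K}*\mu_{t}=\mu_{t}$; what it costs is the Fourier machinery of Section 4, which the paper's two--line proof avoids entirely.
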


 \begin{proof} Let $(\mu_{t}, t \geq 0)$ be a right $K$--invariant restricted convolution semigroup. For each $f \in C_{u}(G), g \in G$, define $f^{K}(g) = \int_{K}f(gk)dk$. Then $f^{K} \in C_{u}(G/K)$, and for all $s, t \geq 0$, using right--$K$--invariance of $\mu_{s+t}$, Fubini's theorem, and right $K$--invariance of $\mu_{t}$
\bean \int_{G}f(g)\mu_{s+t}(dg) & = &  \int_{G}f^{K}(g)\mu_{s+t}(dg)\\
& = & \int_{G}\int_{G}f^{K}(gh)\mu_{s}(dg)\mu_{t}(dh)\\
& = & \int_{G}\int_{G}f(gh)\mu_{s}(dg)\mu_{t}(dh).\eean

By a similar argument
\bean \lim_{t \rightarrow 0}\int_{G}f(g)\mu_{t}(dg) & = & \lim_{t \rightarrow 0} \int_{G}f^{K}(g)\mu_{t}(dg)\\
& = & f^{K}(e) = \int_{K}f(k)dk.\eean

So $(\mu_{t}, t \geq 0)$ is a continuous right $K$ invariant convolution semigroup, with $\mu_{0}$ being normalised Haar measure on $K$. Hence, by Proposition 2.1 of \cite{L1}, $\mu_{t}$ is $K$--bi--invariant for all $t > 0.$
\end{proof}

\section{Negative Definite Functions}

 We define the vector-valued Fourier transform of $\mu \in \mathcal{M}_{K}(G)$ in the obvious way, i.e.
$$ (\mathcal{F} \mu)(\omega) = \pi_{\omega}(\mu) u_{\omega},$$ where $\pi_{\omega}(\mu) = \int_{G}\pi_{\omega}(g)\mu(dg)$ for all $\omega \in \mathcal{P}$. A straightforward calculation yields:

\begin{equation}  \label{conv}
(\mathcal{F} (\mu_{1} * \mu_{2}))(\omega) = \pi_{\omega}(\mu_{1})(\mathcal{F} \mu_{2})(\omega),
\end{equation}
for all $\mu_{1}, \mu_{2} \in \mathcal{M}_{K}(G), \omega \in \mathcal{P}$. We also use the standard notation $$\widehat{\mu}(\pi_{\omega}): = \pi_{\omega}(\mu)^{*} = \int_{G}\pi_{\omega}(g^{-1})\mu(dg)$$ for the Fourier transform of an arbitrary bounded measure on $(G, \mathcal{B}(G))$.

\noindent Let $\mathcal{M}(K\backslash G/K)$ denote the space of $K$--bi--invariant Radon probability measures on $(G, \mathcal{B}(G))$. If $\omega \in \mathcal{P}$ then the {\it spherical transform} of $\mu$ is given by
$$ \widehat{\mu}_{S}(\omega) : = \int_{G}\omega(g^{-1})\mu(dg),$$
so that
$$ \widehat{\mu}_{S}(\omega) = \la \widehat{\mu}(\pi_{\omega}) u_{\omega},  u_{\omega} \ra.$$

It is shown in Theorem 6.8 of \cite{Hey1} that the mapping $\mu \rightarrow  \widehat{\mu}_{S}$ is injective.

\begin{proposition} \label{uni} The mapping $\mathcal{F}: \mathcal{M}_{K}(G) \rightarrow \mathcal{H}_{\infty}(G,K)$ is injective.
\end{proposition}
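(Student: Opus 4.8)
The plan is to show that the single object $\mathcal{F}\mu$ determines every integral $\int_{G} f\,d\mu$ as $f$ ranges over a class of functions rich enough to separate right $K$--invariant measures, and to do this by pairing $\mathcal{F}\mu$ against $\mathcal{F}f$ through the Plancherel measure $\rho$. The engine is the vector--valued Fourier inversion formula recorded just before this proposition.

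First I would fix $f \in C_{c}(G/K)$ and write, for every $g \in G$,
$$ f(g) = \int_{\mathcal{P}}\la (\mathcal{F}f)(\omega), \pi_{\omega}(g)u_{\omega} \ra \rho(d\omega); $$
this is legitimate since such $f$ is bounded and lies in $L^{1}(G/K)$, so $\mathcal{F}f \in \mathcal{H}_{1}(G,K)$. I would then integrate this identity against $\mu$ and interchange the order of integration. Fubini's theorem applies: because $\pi_{\omega}$ is unitary and $\|u_{\omega}\| = 1$, one has $|\la (\mathcal{F}f)(\omega), \pi_{\omega}(g)u_{\omega}\ra| \leq \|(\mathcal{F}f)(\omega)\|_{H_{\omega}}$, whose $\rho$--integral is $\|\mathcal{F}f\|_{\mathcal{H}_{1}(G,K)} < \infty$, while $\mu$ is a probability measure; hence the double integral converges absolutely.

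Carrying out the interchange and using that $\mu$ is a positive (hence real) measure to move it inside the inner product slot occupied by $\pi_{\omega}(g)u_{\omega}$ (so that the conjugation convention is immaterial), the inner integral becomes $\int_{G}\pi_{\omega}(g)u_{\omega}\,\mu(dg) = \pi_{\omega}(\mu)u_{\omega} = (\mathcal{F}\mu)(\omega)$ by the very definition of $\mathcal{F}\mu$. This yields the pairing formula
$$ \int_{G}f(g)\,\mu(dg) = \int_{\mathcal{P}}\la (\mathcal{F}f)(\omega), (\mathcal{F}\mu)(\omega)\ra \rho(d\omega), \qquad f \in C_{c}(G/K). $$
To finish, suppose $\mu_{1}, \mu_{2} \in \mathcal{M}_{K}(G)$ satisfy $\mathcal{F}\mu_{1} = \mathcal{F}\mu_{2}$. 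The pairing formula forces $\int_{G}f\,d\mu_{1} = \int_{G}f\,d\mu_{2}$ for all $f \in C_{c}(G/K)$. Since $\mu_{1}$ and $\mu_{2}$ are right $K$--invariant, they are determined by their images on $G/K$, which by the Riesz representation theorem are determined by integration against $C_{c}(G/K)$; hence $\mu_{1} = \mu_{2}$.

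I do not expect a genuine obstacle here: the argument is a direct consequence of the inversion formula together with the probabilistic normalisation $\mu(G)=1$. The only two points needing care are the Fubini interchange, which is controlled by the $\mathcal{H}_{1}$--bound above, and the verification that $C_{c}(G/K)$ separates right $K$--invariant measures, which is exactly where the standing identification of $\mathcal{M}_{K}(G)$ with Radon probability measures on $G/K$ is used.
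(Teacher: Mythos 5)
Your argument is correct, but it follows a genuinely different route from the paper. The paper mollifies: it first treats the case where $\mu$ has a density $f \in L^{1}(G/K) \cap L^{2}(G/K)$, where $\mathcal{F}\mu = 0$ forces $f = 0$ by the Plancherel isometry (\ref{Planch}), and then handles a general $\mu$ by forming $f_{V} = \psi_{V} * \mu$ for an approximate identity $(\psi_{V})$ and invoking the convolution identity (\ref{conv}) to conclude $f_{V} = 0$ for every $V$, hence $\mu = 0$ (applied, implicitly, to the difference of two measures with equal transforms). You instead derive the Parseval--type pairing
$\int_{G} f \, d\mu = \int_{\mathcal{P}} \la (\mathcal{F}f)(\omega), (\mathcal{F}\mu)(\omega) \ra \, \rho(d\omega)$
directly from the vector--valued inversion formula and Fubini, and finish with Riesz uniqueness on $G/K$. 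Your Fubini justification via the $\mathcal{H}_{1}$--bound and the handling of the conjugation convention are both sound; in fact the author performs essentially this computation later, in the proof of Theorem \ref{neg}, to verify property (A3). The trade--off is that your argument leans on the pointwise inversion formula for bounded $L^{1}$ functions exactly as the paper states it (satisfied by $C_{c}(G/K)$), whereas the paper's route needs only the Plancherel isometry, at the cost of introducing the approximate identity; your version is more direct and avoids the limiting step $f_{V} \to \mu$, which the paper leaves as ``it follows that $\mu = 0$''.
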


 \begin{proof} We follow the procedure of the proof of Lemma 2.1 in \cite{Berg}. First assume that $\mu$ is absolutely continuous with respect to Haar measure, and that the Radon-Nikodym derivative $\frac{d\mu}{dg} =: f \in L^{1}(G/K) \cap L^{2}(G/K)$. Then $\mathcal{F} \mu = 0$ implies that $\mathcal{F}f = 0$ and so $f = 0$ (a.e.) by (\ref{Planch}). For the general case, let $(\psi_{V}, V \in \mathcal{V})$ be an approximate identity based on a fundamental system $\mathcal{V}$ of neighbourhoods of $e$. Now define $f_{V} = \psi_{V} * \mu$. Then for all $V \in \mathcal{V}$, we see from (\ref{conv}) that $\mathcal{F} \mu = 0$ implies that $\mathcal{F}f_{V} = 0$, hence $f_{V} = 0$ (a.e.), and it follows that $\mu = 0$, as required. \end{proof}

\vspace{5pt}

In \cite{Berg}, Berg defined notions of positive- and negative-definite function that could be used to investigate $K$--bi--invariant convolution semigroups. We remind the reader of these notions. A continuous function $p: \mathcal{P} \rightarrow \C$ is said to be {\it positive definite} if $p = \widehat{\mu}_{S}(\omega)$ for some $\mu \in \mathcal{M}(K\backslash G/K)$, and a continuous function $q: \mathcal{P} \rightarrow \C$ is said to be {\it negative definite} if $q(1) = 0$ and $\exp(-tq)$ is positive definite for all $t > 0$. Berg then showed that there is a one--to--one correspondence between negative definite functions and continuous convolution semigroups in $\mathcal{M}(K\backslash G/K)$.

We extend these notions to a more general context as follows. A field $\Psi \in \mathcal{H}_{\infty}(G,K)$ is said to be {\it generalised positive definite} if there exists $\mu \in \mathcal{M}_{K}(G)$ such that $\Psi = \mathcal{F} \mu$. A densely defined closed linear operator $Q$ on $L^{2}(\mathcal{P}, \rho) = \int_{\omega \in \mathcal{P}}H_{\omega}^{K}\rho(d\omega)$
is said to be {\it generalised negative definite}  if it is diagonalisable, in that $Q = (Q(\omega), \omega \in \mathcal{P})$ with each $Q(\omega)$ acting as multiplication by a scalar in $H_{\omega}^{K}$, for $\omega \in \mathcal{P}$, and is such that
\begin{enumerate}

\item $Q(1) = 0$, where $Q(1)$ denotes the restriction of $Q$ to $H_{1} = H_{1}^{K}$.

\item $Q$ is the infinitesimal generator of a one--parameter contraction semigroup $(R_{t}, t \geq 0)$ acting on $L^{2}(\mathcal{P}, \rho)$.

\item For each $t \geq 0$, $R_{t}$ extends to a bounded linear operator on $\mathcal{H}_{\infty}(G,K)$, so that the mapping $\omega \rightarrow R_{t}(\omega)u_{\omega}$ is positive definite.


\end{enumerate}

{\bf Notes.} \begin{enumerate}
\item In (3), as $u_{\omega}$ is cyclic in $H_{\omega}$, it is equivalent to require $$R_{t}(\omega)\pi_{\omega}(g)u_{\omega} = \pi_{\omega}(\mu_{t})\pi_{\omega}(g)u_{\omega},$$
for all $g \in G,t \geq 0, \omega \in \mathcal{P}$.
\item The positive definite field $(R_{t}(\omega)u_{\omega}, \omega \in \mathcal{P})$ is uniquely determined by $Q$ as the solution of a family of initial value problems in $H_{\omega}^{K}$ for $\omega \in \mathcal{P}$:
$$ \frac{d\Psi(t)(\omega)}{dt} = Q(\omega)\Psi(t)(\omega),~\mbox{with initial condition}~\Psi(0)(\omega) = u_{\omega}.$$

\end{enumerate}

\noindent Now suppose that $\Psi$ is positive definite and that $\mu$ is $K$ bi-invariant. Then for all $\omega \in \mathcal{P}$,
\bean \la u_{\omega}, \Psi(\omega) \ra & = & \la u_{\omega}, \pi_{\omega}(\mu_{t})u_{\omega} \ra \\
& = & \int_{G}\la  u_{\omega}, \pi_{\omega}(g) u_{\omega}\ra \mu(dg)\\
& = & \int_{G} \omega(g)\mu(dg) = \widehat{\widetilde{\mu_{S}}}(\omega), \eean

\noindent and so $p(\omega): = \la u_{\omega}, \Psi(\omega)\ra$ essentially coincides with the notion of positive definite function in the bi-invariant context, as introduced by Berg in \cite{Berg}. The word ``essentially'' is included, because Berg required his positive-definite functions to be continuous. Although we could impose continuity on our mapping $\Psi$, it is then not clear to the author how to prove the next theorem.

\begin{theorem} \label{neg} There is a one-to-one correspondence between generalised negative definite functions on $\mathcal{P}$ and  right $K$--invariant restricted convolution semigroups on $G$.
\end{theorem}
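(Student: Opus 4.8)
The plan is to establish the correspondence in both directions, building on the fact (Theorem \ref{bad}) that every right $K$--invariant restricted convolution semigroup is actually a continuous $K$--bi--invariant convolution semigroup, together with Berg's existing one--to--one correspondence between (scalar) negative definite functions and continuous convolution semigroups in $\mathcal{M}(K\backslash G/K)$. First I would take a generalised negative definite operator $Q = (Q(\omega), \omega \in \mathcal{P})$ and produce a semigroup of measures. By the definition, $Q$ generates a contraction semigroup $(R_t, t \geq 0)$ on $L^2(\mathcal{P}, \rho)$, and condition (3) guarantees that for each $t$ the positive definite field $\omega \mapsto R_t(\omega)u_\omega$ equals $\mathcal{F}\mu_t$ for some $\mu_t \in \mathcal{M}_K(G)$. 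Injectivity of $\mathcal{F}$ (Proposition \ref{uni}) makes $\mu_t$ unique. I would then verify that $(\mu_t, t \geq 0)$ satisfies (A1)--(A4): right $K$--invariance is automatic from membership in $\mathcal{M}_K(G)$; the normalisation $Q(1) = 0$ forces $R_t$ to fix the spherical function $\omega \equiv 1$, giving $\mu_0 = \delta_e$ via the inversion formula; the semigroup law $R_{s+t} = R_s R_t$ translates, through the convolution identity (\ref{conv}) and the equivalent formulation in Note (1), into (A3); and strong continuity of $(R_t)$ at $t=0$ yields (A4).

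For the reverse direction I would start from a right $K$--invariant restricted convolution semigroup $(\mu_t, t \geq 0)$, which by Theorem \ref{bad} is $K$--bi--invariant and continuous. I would define $R_t$ on $L^2(\mathcal{P}, \rho) = \mathcal{H}_2^{(0)}(G,K)$ by the diagonal action $R_t(\omega) = \widehat{\widetilde{\mu_{t,S}}}(\omega)$, i.e. multiplication by the spherical transform, which in the bi--invariant case collapses the operator-valued $\pi_\omega(\mu_t)$ to a scalar on the one--dimensional space $H_\omega^K$ (recall $\dim H_\omega^K = 1$). Berg's correspondence supplies a scalar negative definite $q$ with $\widehat{\mu_{t,S}} = \exp(-tq)$; the generator $Q$ is then multiplication by $-q(\omega)$ on each fibre, and I would check $Q(1) = 0$, that $Q$ generates $(R_t)$ as a contraction semigroup, and that condition (3) holds because each $R_t$ is by construction the spherical transform of a measure in $\mathcal{M}(K\backslash G/K) \subset \mathcal{M}_K(G)$. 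Finally I would confirm the two constructions are mutually inverse, using the injectivity of both $\mathcal{F}$ and the spherical transform $\mu \mapsto \widehat{\mu}_S$ (Theorem 6.8 of \cite{Hey1}) to rule out ambiguity.

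The main obstacle I anticipate is the bookkeeping around conditions (2) and (3) in the definition of generalised negative definiteness, specifically showing that the abstractly-defined contraction semigroup $(R_t)$ on $L^2(\mathcal{P}, \rho)$ genuinely extends to $\mathcal{H}_\infty(G,K)$ with the positive-definiteness property, rather than merely acting diagonally. In one direction this is assumed; in the other I must produce it, and the delicate point is that positive definiteness of $\omega \mapsto R_t(\omega)u_\omega$ is exactly the statement that this field lies in the range of $\mathcal{F}$ restricted to probability measures. I expect the resolution to exploit heavily the fact that in the $K$--bi--invariant setting everything reduces to the one--dimensional fibres $H_\omega^K$, so that $\pi_\omega(\mu_t)u_\omega = \widehat{\widetilde{\mu_{t,S}}}(\omega)\, u_\omega$ and the operator-valued and scalar pictures coincide; this collapse is what ultimately makes the generalised theory reduce to Berg's scalar theory and thereby, somewhat disappointingly, yields nothing new.
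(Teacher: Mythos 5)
Your proposal is correct and follows the same overall reduction as the paper: both directions hinge on Theorem \ref{bad} (collapsing everything to the continuous $K$--bi--invariant case), the injectivity of $\mathcal{F}$ (Proposition \ref{uni}), the convolution identity (\ref{conv}), and Fourier inversion plus dominated convergence for the continuity statements. The one genuine difference is in the direction from semigroups to negative definite functions: the paper constructs $(R_{t}, t\geq 0)$ directly by setting $R_{t}(\omega) = \pi_{\omega}(\mu_{t})$ on $H_{\omega}^{K}$, verifies the semigroup law from (A3) by a bare-hands computation with $f_{\omega}(\cdot) = \la \pi_{\omega}(\cdot)u_{\omega}, u_{\omega}\ra$, and obtains $Q$ as the infinitesimal generator, whereas you outsource this step to Berg's scalar correspondence, taking $Q$ to be multiplication by $-q(\omega)$ on the one-dimensional fibres. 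Your route is legitimate and makes the ``nothing new'' moral transparent from the outset, at the cost of importing Berg's continuity hypotheses (the paper deliberately drops continuity of the fields, precisely because it is unclear how to prove the theorem with it imposed); the paper's direct construction avoids any reliance on that cited result. One small misattribution: $\mu_{0} = \delta_{e}$ does not follow from $Q(1) = 0$ (which only encodes $\mu_{t}(G) = 1$); it follows from $R_{0} = I$, so that $\mathcal{F}\mu_{0} = \mathcal{F}\delta_{e}$, together with Proposition \ref{uni}. Since you already have injectivity in hand, this is a slip rather than a gap.
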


 \begin{proof} Suppose that $(\mu_{t}, t \geq 0)$ is a  right $K$--invariant restricted convolution semigroup on $G$. Then $R_{t}(\omega) = \pi_{\omega}(\mu_{t})$ defines a one-parameter contraction $C_{0}$--semigroup (of positive real numbers) in $H_{\omega}^{K}$. To see that $R_{t}(\omega)$ preserves $H_{\omega}^{K}$, for all $\omega \in \mathcal{P}, t \geq 0$, note that, by Theorem \ref{bad}, $\mu_{t}$ is left $K$-invariant, and so for all $k \in K$,
 $$ \pi_{\omega}(k)R_{t}(\omega)u_{\omega} = \int_{G}\pi_{\omega}(kg)\mu_{t}(dg) = R_{t}(\omega)u_{\omega}.$$

  To verify the semigroup property, it is sufficient to observe that for all $\omega \in \mathcal{P}, f_{\omega} \in C_{u}(G/K)$, where $f_{\omega}(\cdot) = \la \pi_{\omega}(\cdot)u_{\omega}, u_{\omega} \ra$ and that for all $s, t \geq 0$, by (A3),
 \bean \la \pi_{\omega}(\mu_{s})\pi_{\omega}(\mu_{t})u_{\omega}, u_{\omega} \ra & = & \int_{G}\int_{G} \la \pi_{\omega}(gh)u_{\omega}, u_{\omega} \ra \mu_{s}(dg)\mu_{t}(dh)\\
 & = & \int_{G}\la \pi_{\omega}(g)u_{\omega}, u_{\omega} \ra \mu_{s+t}(dg)\\
 & = & \la \pi_{\omega}(\mu_{s+t})u_{\omega}, u_{\omega} \ra. \eean

\noindent For all $\psi \in \mathcal{H}_{\infty}(G,K)$, we define $R_{t}\psi(\omega) = R_{t}(\omega)\psi(\omega)$.

\noindent Then $R_{t} \in B(\mathcal{H}_{\infty}(G,K))$ for all $t \geq 0$, since
$$ ||R_{t}\psi||_{\mathcal{H}_{\infty}(G,K)} = \esssup_{\omega \in \mathcal{P}}||R_{t}(\omega)\psi(\omega)||_{H_{\omega}} \leq ||\psi||_{\mathcal{H}_{\infty}(G,K)}.$$
It is easy to see that the restriction of  $(R_{t}, t \geq 0)$ to $L^{2}(\mathcal{P}, \rho)$  is in fact a contraction $C_{0}$--semigroup; indeed to verify strong continuity, we can use Lebesgue's dominated convergence theorem to deduce that
$$ \lim_{t \rightarrow 0}\int_\mathcal{P}||\pi_{\omega}(\mu_{t})\Psi(\omega) - \Psi(\omega)||^{2}_{H_{\omega}^{K}}\rho(d\omega) = 0,$$ for all $\Psi \in L^{2}(\mathcal{P}, \rho)$.
The infinitesimal generator of $(R_{t}, t \geq 0)$ is then the required negative definite function. Note that since $\pi_{1}(\mu_{t}) = 1$ for all $t \geq 0$, it is clear that $Q(1)u_{1} = 0$.

Conversely, if $Q$ is negative definite, it follows that there exists a family of measures $\{\mu_{t}, t \geq 0\}$ in $\mathcal{M}_{K}(G)$ so that for all $\omega \in \mathcal{P}$, $Q(\omega)$ is the infinitesimal generator of the semigroup $(\pi_{\omega}(\mu_{t}), t \geq 0)$ acting in $H_{\omega}^{K}$. Hence, by (\ref{conv}) and Proposition \ref{uni}, $(\mu_{t}, t \geq 0)$ is a semigroup under convolution with $\mu_{0} = \delta_{e}$. Indeed, for all $s, t \geq 0, \omega \in \mathcal{P}$ we have
$$ \pi_{\omega}(\mu_{s+t})u_{\omega} = \pi_{\omega}(\mu_{s})\pi_{\omega}(\mu_{s})u_{\omega} = \pi_{\omega}(\mu_{s} *\mu_{t})u_{\omega}. $$

 To show that the semigroup of measures satisfies (A3), let $f \in C_{c}(G/K)$ and use scalar Fourier inversion (\ref{sFi}) as follows
\bean \int_{G}f(g)\mu_{t}(dg) & = & \int_{G}\int_{\mathcal{P}}\la \pi_{\omega}(f)u_{\omega}, \pi_{\omega}(g)u_{\omega} \ra \rho(d\omega) \mu_{t}(dg)\\
& = & \int_{\mathcal{P}}\int_{G}\la \pi_{\omega}(f)u_{\omega}, \pi_{\omega}(g)u_{\omega} \ra \mu_{t}(dg) \rho(d\omega) \\
& = & \int_{\mathcal{P}}\la \pi_{\omega}(f)u_{\omega}, \widehat{\mu_{t}}(\pi_{\omega})u_{\omega} \ra \rho(d\omega),\eean

\noindent where the use of Fubini's theorem to interchange integrals is justified by the fact that the sections $\omega \rightarrow \pi_{\omega}(f)u_{\omega} \in \mathcal{H}_{1}(G,K).$
This latter fact also justifies the use of Lebesgue's dominated convergence theorem to deduce from the above that
$$ \lim_{t \rightarrow 0}\int_{G}f(g)\mu_{t}(dg) = \int_{\mathcal{P}}\la \pi_{\omega}(f)u_{\omega}, u_{\omega} \ra \rho(d\omega) = f(e). $$ \end{proof}

\vspace{5pt}

The result of this theorem is negative. When combined with the conclusion of Theorem \ref{bad}, it tells us that there is a one--to--one correspondence between generalised negative definite functions and continuous $K$--bi--invariant convolution semigroups, and hence between generalised negative definite functions and negative definite functions. It may be interesting for future work to investigate negative definite functions that fail to be diagonalisable.

Before we leave the topic of generalised negative definite functions, we will, for completeness, follow Berg \cite{Berg}, by making intrinsic characterisations of generalised positive and negative definite functions:

We say that a field $P \in \mathcal{H}_{\infty}(G,K)$ is a {\it Berg P-D function} if for all $\nN, a_{1}, \ldots, a_{n} \in \C, \omega_{1}, \ldots, \omega_{n} \in \mathcal{P}$,

$$ \Re\left(\sum_{i=1}^{n}a_{i}\omega_{i}\right) \geq 0~\mbox{on}~G \Rightarrow \Re\left(\sum_{i=1}^{n}a_{i} \la u_{\omega_{i}}, P(\omega_{i})u_{\omega_{i}} \ra\right) \geq 0.$$

A closed densely defined linear operator $Q$ acting in $L^{2}(\mathcal{P}, \rho)$ is said to be a {\it Berg N-D function} if
\begin{enumerate}
\item $Q(1) = 0$,
\item for all $\nN, a_{1}, \ldots, a_{n} \in \C, \omega_{1}, \ldots, \omega_{n} \in \mathcal{P}$,
\bean  & & \sum_{i=1}^{n}a_{i} = 0~\mbox{and}~\Re\left(\sum_{i=1}^{n}a_{i}\omega_{i}\right) \geq 0~\mbox{on}~G\\ & \Rightarrow & \Re\left(\sum_{i=1}^{n}a_{i} \la u_{\omega_{i}}, Q(\omega_{i})u_{\omega_{i}} \ra\right) \leq 0.\eean
\end{enumerate}

We  generalise Theorem 5.1 in \cite{Berg}, where the $K$--bi--invariant case was explicitly considered:

\begin{theorem} \begin{enumerate}
\item Every generalised positive definite function on $\mathcal{P}$ is a Berg P-D function,
\item If $Q$ is a  generalised negative definite function, then $-Q$ is a Berg N-D function.
\end{enumerate}
\end{theorem}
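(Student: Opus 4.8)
The plan is to reduce both statements to the integral representation $\langle u_\omega, (\mathcal{F}\mu)(\omega)\rangle = \int_G \omega(g)\,\mu(dg)$, which was already obtained in the computation immediately preceding Theorem \ref{neg} (it uses only $\omega(g) = \langle u_\omega, \pi_\omega(g)u_\omega\rangle$ and linearity of the Bochner integral defining $\pi_\omega(\mu)$, so it is valid for every $\mu \in \mathcal{M}_K(G)$, not merely the bi--invariant ones), and then to exploit positivity of the measures involved. For part (1), I would write $P = \mathcal{F}\mu$ with $\mu \in \mathcal{M}_K(G)$, so that the pairing $\langle u_{\omega_i}, P(\omega_i)u_{\omega_i}\rangle$ appearing in the Berg P-D condition equals $\int_G \omega_i(g)\,\mu(dg)$ for each $i$. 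Summing against the coefficients and using linearity gives
$$\sum_{i=1}^n a_i \langle u_{\omega_i}, P(\omega_i)u_{\omega_i}\rangle = \int_G \Big(\sum_{i=1}^n a_i \omega_i(g)\Big)\mu(dg).$$
Taking real parts and recalling that $\mu$ is a positive probability measure, the hypothesis $\Re(\sum_i a_i\omega_i)\geq 0$ on $G$ makes the integrand nonnegative $\mu$--almost everywhere, so the real part of the right--hand side is $\geq 0$. This is exactly the Berg P-D property.

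For part (2), note first that $(-Q)(1)=0$ is immediate from $Q(1)=0$. By Theorem \ref{neg} the operator $Q$ corresponds to a right $K$--invariant restricted convolution semigroup $(\mu_t, t\geq 0)$ with $\mu_0 = \delta_e$, and on each one--dimensional space $H_\omega^K = \C u_\omega$ the semigroup $R_t(\omega) = \pi_\omega(\mu_t)$ acts as multiplication by the scalar $r_t(\omega) = \langle u_\omega, \pi_\omega(\mu_t)u_\omega\rangle = \int_G\omega(g)\,\mu_t(dg)$. Since $Q(\omega)$ is the generator of this scalar $C_0$--contraction semigroup, differentiating at $t=0$ gives $\langle u_\omega, Q(\omega)u_\omega\rangle = \frac{d}{dt}\big|_{t=0}\int_G\omega(g)\,\mu_t(dg)$, using that $u_\omega$ is a unit vector.

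Now I would fix $a_1,\dots,a_n$ and $\omega_1,\dots,\omega_n$ satisfying $\sum_i a_i = 0$ and $\Re(\sum_i a_i\omega_i)\geq 0$ on $G$, and set $F = \sum_i a_i\omega_i$. By linearity and the previous identity, $\sum_i a_i\langle u_{\omega_i}, Q(\omega_i)u_{\omega_i}\rangle = \frac{d}{dt}\big|_{t=0}\int_G F\,d\mu_t$. Because $\mu_0 = \delta_e$ and $\omega_i(e)=1$, the value at $t=0$ is $\int_G F\,d\mu_0 = F(e) = \sum_i a_i = 0$ --- this is precisely where the hypothesis $\sum_i a_i = 0$ is used --- so the derivative reduces to $\lim_{t\to 0^+}\frac{1}{t}\int_G F\,d\mu_t$. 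For each $t>0$ the quantity $\frac{1}{t}\int_G \Re F\,d\mu_t$ is nonnegative since $\Re F\geq 0$ and $\mu_t$ is a positive measure, and passing to the limit preserves the inequality. Hence $\Re\big(\sum_i a_i\langle u_{\omega_i}, Q(\omega_i)u_{\omega_i}\rangle\big)\geq 0$, equivalently $\Re\big(\sum_i a_i\langle u_{\omega_i}, (-Q)(\omega_i)u_{\omega_i}\rangle\big)\leq 0$, which is the Berg N-D property.

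The main obstacle is the differentiation step in part (2): one must justify that $t\mapsto\int_G\omega\,d\mu_t$ is genuinely differentiable at $0$ with derivative $\langle u_\omega, Q(\omega)u_\omega\rangle$. This is exactly where the structure supplied by Theorem \ref{neg} together with $\dim(H_\omega^K)=1$ is essential: the restriction of $R_t$ to $H_\omega^K$ is a scalar contraction semigroup, hence of the form $e^{tQ(\omega)}$, so the derivative exists and equals $Q(\omega)$ applied to $u_\omega$. Everything else --- interchanging finite sums with integration and with taking real parts, and the passage to the limit of the difference quotient --- is routine.
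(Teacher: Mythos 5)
Your proof is correct and follows essentially the same route as the paper: part (1) is the identical integral computation $\sum_i a_i\langle u_{\omega_i},P(\omega_i)u_{\omega_i}\rangle=\int_G\bigl(\sum_i a_i\omega_i\bigr)d\mu$ plus positivity of $\mu$, and part (2) is the same difference-quotient argument, since your $\frac{1}{t}\int_G F\,d\mu_t$ is exactly the paper's $\frac{1}{t}\sum_i a_i\langle u_{\omega_i},(R_t(\omega_i)-1)u_{\omega_i}\rangle$ once one uses $\sum_i a_i=0$. The only cosmetic difference is that you invoke Theorem \ref{neg} to produce the measures $\mu_t$, whereas condition (3) in the definition of a generalised negative definite function already supplies them directly.
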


\begin{proof} \begin{enumerate}
\item Suppose that $\Psi$ is positive definite, so that $\psi = \mathcal{F}\mu$ for some $\mu \in \mathcal{M}_{K}(G)$. For arbitrary $a_{1}, \ldots, a_{n} \in \C, \omega_{1}, \ldots, \omega_{n} \in \mathcal{P}$, we have
\bean \sum_{i=1}^{n}a_{i} \la u_{\omega_{i}}, \Psi(\omega_{i})u_{\omega_{i}} \ra & = & \int_{G} \sum_{i=1}^{n}a_{i} \la u_{\omega_{i}}, \pi_{\omega_{i}}(g)u_{\omega_{i}} \ra \mu(dg)\\
& = & \int_{G}\sum_{i=1}^{n}a_{i}\omega_{i}(g)\mu(dg), \eean
from which the required result follows easily.

\item Now suppose that $Q$ is negative definite. Then $Q$ is the infinitesimal generator of a one--parameter contraction semigroup $(R_{t}, t \geq 0)$ acting on $L^{2}(\mathcal{P}, \rho)$, and the field $ (R_{t}(\omega)u_{\omega},\omega \in \mathcal{P})$ is positive definite. Then if $\sum_{i=1}^{n}a_{i} = 0$ and $\Re\left(\sum_{i=1}^{n}a_{i}\omega_{i}\right) \geq 0$ on $G$, we see that
    $$ \Re\left(\sum_{i=1}^{n}a_{i}.\frac{1}{t}\la u_{\omega_{i}}, (R_{t}(\omega_{i}) - 1)u_{\omega_{i}} \ra\right) \geq 0$$
 and the result follows when we take the limit as $t \rightarrow 0.$
 \end{enumerate}
\end{proof}

\section{The L\'{e}vy-Khintchine Formula}

In this section $G$ is a Lie group of dimension $d$ having Lie algebra $\g$. Let $\{X_{1}, \ldots, X_{d}\}$ be a basis for $\g$, which we consider as acting as left-invariant vector fields on $G$.  We obtain a dense subspace $C_{u}^{2}(G)$ of $C_{u}(G)$ by

\bean  C_{u}^{2}(G)& := &  \left\{f \in C_{u}(G); X_{i}f \in C_{u}(G)~\mbox{and}~X_{j}X_{k}f \in C_{u}(G) \right.\\
& & \left.~\mbox{for all}~1 \leq i,j,k \leq d\right\}. \eean
It is well-known that there exist functions $x_{i} \in C_{c}(G)~(1 \leq i \leq d)$ which are canonical co-ordinate functions in a co-ordinate neighbourhood of $e$, and we say that $\nu$ is a {\it L\'{e}vy measure} on $G$ if $\nu(\{e\}) = 0$ and for any co-ordinate neighbourhood $U$ of the neutral element in $G$:

\begin{equation} \label{Lmeasdef}
\int_{G}\left(\sum_{i=1}^{d}x_{i}(\tau)^{2}\right)\nu(d\tau) < \infty~\mbox{and}~\nu(U^{c}) < \infty.
\end{equation}

\noindent where $(x_{1}, \ldots, x_{d})$ are canonical co-ordinate functions on $U$ as above.

The proof of the next celebrated theorem, goes back to the seminal work of Hunt \cite{Hunt}. The first monograph treatment was due to Heyer \cite{He1}, and more recent treatments can be found in Liao \cite{Liao}, and Applebaum \cite{App}.

\begin{theorem}[Hunt's theorem] \label{Hunt}
Let $(\mu_{t}, t \geq 0)$ be a convolution semigroup of measures in $G$, with associated semigroup of operators $(P_{t}, t \geq 0)$ acting on $C_{u}(G)$
in $G$ with generator $\mathcal{L}$ then
\begin{enumerate}
\item $C_{u}^{2}(G) \subseteq \mbox{Dom}(\mathcal{L})$. \item For each
$\sigma \in G, f \in C_{u}^{2}(G)$,
\begin{eqnarray} \label{hu}
\mathcal{L}f(\sigma) & = & \sum_{i=1}^{d}b^{i}X_{i}f(\sigma) +
\sum_{i,j=1}^{d}a^{ij}X_{i}X_{j}f(\sigma) \nonumber \\
 &  + & \int_{G-\{e\}}\left(f(\sigma \tau) - f(\sigma) -
  \sum_{i=1}^{d} x^{i}(\tau)X_{i}f(\sigma)\right)\nu(d\tau),
\end{eqnarray}

where $b = (b^{1}, \ldots b^{d}) \in {\R}^{d}, a = (a^{ij})$ is a
non-negative-definite, symmetric $d \times d$ real-valued matrix
and $\nu$ is a L\'{e}vy measure on $G$.
\end{enumerate}
Conversely, any linear operator with a representation as in
(\ref{hu}) is the restriction to $C_{u}^{2}(G)$ of the infinitesimal generator
corresponding to a unique convolution semigroup of probability
measures.
\end{theorem}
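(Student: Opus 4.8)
The plan is to prove the two implications separately, working throughout in a fixed system of canonical coordinates of the first kind on a relatively compact neighbourhood $U$ of $e$, so that the coordinate functions $x_i$ appearing in (\ref{Lmeasdef}) agree to second order with the exponential chart. Everything is reduced to the measures $\mu_t$ through $P_t f(\sigma) = \int_G f(\sigma\tau)\mu_t(d\tau)$, and the basic object to analyse is the difference quotient $t^{-1}(P_t f - f)$ as $t \to 0$.

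For the forward implication, fix $f \in C_u^2(G)$ and split
$$ \frac{1}{t}(P_t f(\sigma) - f(\sigma)) = \frac{1}{t}\int_{U}\big(f(\sigma\tau) - f(\sigma)\big)\mu_t(d\tau) + \frac{1}{t}\int_{U^c}\big(f(\sigma\tau) - f(\sigma)\big)\mu_t(d\tau). $$
On $U$ I would substitute the second-order Taylor expansion
$$ f(\sigma\tau) - f(\sigma) = \sum_{i=1}^d x^i(\tau)X_i f(\sigma) + \frac{1}{2}\sum_{i,j=1}^d x^i(\tau)x^j(\tau)X_iX_j f(\sigma) + R(\sigma,\tau), $$
whose remainder $R$ is controlled by $\sum_i x^i(\tau)^2$ uniformly in $\sigma$ because $X_jX_k f \in C_u(G)$. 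The crux of the argument, and the step I expect to be the main obstacle, is then to show that as $t \to 0$: (i) for every $\phi \in C_c(G)$ vanishing near $e$, the limit $\lim_{t\to0}t^{-1}\int\phi\,d\mu_t =: \int\phi\,d\nu$ exists and defines a L\'{e}vy measure $\nu$ satisfying (\ref{Lmeasdef}); (ii) the functionals $b^i_t := t^{-1}\int_U x^i\,d\mu_t$ and $a^{ij}_t := (2t)^{-1}\int_U x^ix^j\,d\mu_t$ stay bounded and converge; and (iii) the second moment $t^{-1}\int_U(\sum_i x_i^2)\,d\mu_t$ remains bounded. These are exactly Hunt's estimates, and the mechanism that delivers them is the semigroup relation $\mu_{s+t} = \mu_s * \mu_t$ together with vague continuity, which forces an approximate additivity on these functionals and rules out both escape of mass and blow-up of the second moment near $e$; the finiteness of $\nu$ near $e$ in (\ref{Lmeasdef}) then comes from (iii) via Fatou. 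Granting (i)--(iii), the drift $b$, the L\'{e}vy measure $\nu$, and the matrix $a = (a^{ij})$ emerge; symmetry of $a$ is automatic, and non-negative-definiteness follows because $\sum_{ij}\xi_i\xi_j a^{ij}$ is the limit of the manifestly non-negative quantities $(2t)^{-1}\int_U(\sum_i \xi_i x^i)^2\,d\mu_t$. Finally I would verify that the assembled right-hand side equals $\mathcal{L}f$ with the difference quotient converging uniformly in $\sigma$, and it is precisely this uniformity that yields $C_u^2(G) \subseteq \mathrm{Dom}(\mathcal{L})$.

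For the converse I would construct the associated process directly, in the spirit of the L\'{e}vy--It\^{o} decomposition, and take $\mu_t$ to be its law at time $t$. Concretely I would assemble three independent ingredients: a left-invariant diffusion generated by $\sum_i b^i X_i + \sum_{ij} a^{ij} X_iX_j$, obtained by solving a Stratonovich SDE on $G$ driven by a Brownian motion with covariance $a$; a compound-Poisson part carrying the large jumps with intensity $\nu|_{U^c}$; and a compensated sum of small jumps built from a Poisson random measure on $(0,\infty)\times U$ with intensity $dt\otimes\nu$, where the compensator supplies exactly the $\sum_i x^i(\tau)X_i f$ term and the integral converges because $\int_U(\sum_i x_i^2)\,d\nu < \infty$. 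It\^{o}'s formula for $G$-valued semimartingales then shows that the generator of $(\mu_t, t\geq 0)$ restricts to (\ref{hu}) on $C_u^2(G)$, while stationarity and independence of the increments make $(\mu_t, t\geq 0)$ a convolution semigroup. Uniqueness is the cleanest point: since $C_u^2(G)$ is a core for the generator, any two convolution semigroups whose generators agree with (\ref{hu}) on $C_u^2(G)$ have the same generator and therefore coincide.
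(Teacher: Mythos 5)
The paper does not prove this theorem: it is stated as a classical result, with the proof attributed to Hunt \cite{Hunt} and to the monograph treatments of Heyer \cite{He1}, Liao \cite{Liao} and Applebaum \cite{App}. So there is no internal argument to compare yours against; the only fair comparison is with the standard proof in those references, and your outline does follow that route faithfully -- the splitting of the difference quotient $t^{-1}(P_tf-f)$ into a contribution near $e$ (handled by second--order Taylor expansion in canonical coordinates) and a jump contribution away from $e$, the identification of $(b,a,\nu)$ as limits of truncated moments of $t^{-1}\mu_t$, and the passage to the converse via an explicit construction of the process.

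That said, as a proof your text has a genuine gap, and you have located it yourself: items (i)--(iii), the Hunt estimates, are precisely where all the analytic difficulty of the theorem lives, and you assert them rather than establish them. In particular, the existence of the limits $\lim_{t\to 0}t^{-1}\int\phi\,d\mu_t$ for $\phi$ vanishing near $e$, and of $\lim_{t\to 0}b^i_t$ and $\lim_{t\to 0}a^{ij}_t$, does not follow from ``approximate additivity'' in any quick way; the standard arguments first prove the uniform bound $\sup_{0<t\le 1}t^{-1}\mu_t(V^c)<\infty$ and the second--moment bound by a careful choice of test functions, then extract vague subsequential limits and must separately prove that the limit is unique (Hunt and Heyer do this through the resolvent rather than the raw difference quotient, which also resolves the circularity in your step ``verify that the assembled right-hand side equals $\mathcal{L}f$''). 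On the converse, your L\'{e}vy--It\^{o}/SDE construction is a legitimate alternative to Hunt's original Hille--Yosida-type argument (it is essentially the approach of \cite{AK} and \cite{Appa}), and the core argument for uniqueness is fine once you note that $P_t$ commutes with left-invariant vector fields, so $C^2_u(G)$ is a dense invariant subspace of the domain. In short: a correct roadmap of the classical proof, entirely consistent with the sources the paper cites, but not a self-contained proof, because the central estimates are named rather than derived.
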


Now it is well-known that if $(\mu_{t}, t \geq 0)$ is a convolution semigroup of measures, then so is $(\widetilde{\mu_{t}}, t \geq 0)$; from which it follows that for each $\pi \in \widehat{G}, (\widehat{\widetilde{\mu_{t}}}(\pi), t \geq 0)$ is a contraction semigroup in $H_{\pi}$. Let $\mathcal{A}_{\pi}$ denote the infinitesimal generator of this semigroup, and $D_{\pi}$ be its domain. Following Heyer \cite{Hey} pp.269-70, we may extend the domain of $\mathcal{L}$ to include bounded uniformly continuous functions on $G$ that take the form $f_{\psi, \phi}(g) = \la \pi(g)\psi, \phi \ra$ for all $\psi \in D_{\pi}, \phi \in H_{\pi}, g \in G$, by observing that for all $t \geq 0$,
$$ P_{t}f_{\psi, \phi}(g) = \la \widehat{\widetilde{\mu_{t}}}(\pi)\psi, \pi(g^{-1})\phi \ra,$$

\noindent from which we can deduce that

\begin{equation} \label{inflink}
\mathcal{L}f_{\psi, \phi}(g) = \la  \mathcal{A}_{\pi}\psi, \pi(g^{-1})\phi \ra.
\end{equation}

\noindent In the sequel, we will need the infinitesimal representation $d\pi$ of $\g$, corresponding to each $\pi \in \widehat{G}$, where for each $Y \in \g, -id\pi(Y)$ is the infinitesimal generator of the strongly continuous one--parameter unitary group

\noindent $(\pi(\exp(tY)), t \in \R)$, where $\exp:\g \rightarrow G$ is the exponential map. Hence $d\pi(Y)$ is a (densely-defined) skew-adjoint linear operator acting in $H_{\pi}$.

\noindent  We will also need the dense set $H_{\pi}^{\omega}$ of analytic vectors in $H_{\pi}$ defined by
$$ H_{\pi}^{\omega}: = \{\psi \in H_{\pi}; g \rightarrow \pi(g)\psi~\mbox{is analytic}\}.$$

\noindent It is shown in \cite{Appa} that $H_{\pi}^{\omega} \subseteq D_{\pi}$.

\noindent  The following L\'{e}vy-Khintchine type formula first appeared in Heyer \cite{Hey}, where it was established for compact Lie groups. Its extension to general Lie groups is implicit in Heyer \cite{He1}. For an alternative approach, based on operator-valued stochastic differential equations, see \cite{Appa}.

\begin{theorem} \label{LK} 
If $(\mu_{t}, t \geq 0)$ is a convolution semigroup of probability measures on $G$, then for all $t \geq 0, \pi \in \G$,
$$ \widehat{\widetilde{\mu_{t}}}(\pi) = e^{t\mathcal{A}_{\pi}},$$
where for all $\psi \in H_{\pi}^{\omega}$,
\begin{eqnarray} \label{LK1}
 \mathcal{A}_{\pi}\psi & = & \sum_{i=1}^{d}b_{i}d\pi(X_{i})\psi + \sum_{j,k =1}^{d}a_{jk}d\pi(X_{j})d\pi(X_{k})\psi \nonumber \\
 & + & \int_{G}\left(\pi(\tau)\psi - \psi - \sum_{i=1}^{d}x_{i}(\tau)d\pi(X_{i})\psi\right)\nu(d\tau),
\end{eqnarray}
(where $b, a, \nu$ and $x_{i} (1 \leq i \leq d)$ are as in Theorem \ref{Hunt}.)
\end{theorem}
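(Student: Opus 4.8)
The plan is to deduce both assertions from Hunt's theorem (Theorem \ref{Hunt}), via the identity (\ref{inflink}) that couples the generator $\mathcal{L}$ to $\mathcal{A}_{\pi}$ through the matrix coefficients $f_{\psi,\phi}(g)=\la \pi(g)\psi,\phi\ra$. The exponential formula $\widehat{\widetilde{\mu_{t}}}(\pi)=e^{t\mathcal{A}_{\pi}}$ is the soft part: the continuity of the convolution semigroup makes $(\widehat{\widetilde{\mu_{t}}}(\pi),t\geq 0)$ a $C_{0}$--contraction semigroup on $H_{\pi}$ (contractivity from $\|\pi(\mu_{t})\|\leq \mu_{t}(G)=1$, strong continuity checked first weakly on matrix coefficients and then upgraded), and since $\mathcal{A}_{\pi}$ is by definition its infinitesimal generator, Hille--Yosida gives $\widehat{\widetilde{\mu_{t}}}(\pi)=e^{t\mathcal{A}_{\pi}}$ immediately.

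The substance is formula (\ref{LK1}) on analytic vectors. Fix $\psi\in H_{\pi}^{\omega}$ and $\phi\in H_{\pi}$. First I would verify $f_{\psi,\phi}\in C_{u}^{2}(G)$, so that (\ref{hu}) applies: boundedness and left uniform continuity follow from unitarity and strong continuity of $\pi$, and the required smoothness from analyticity of $\psi$. Next I differentiate: as $X_{i}$ acts as the left--invariant field, $X_{i}f_{\psi,\phi}(g)=\la \pi(g)d\pi(X_{i})\psi,\phi\ra=f_{d\pi(X_{i})\psi,\phi}(g)$, and iterating, $X_{j}X_{k}f_{\psi,\phi}(g)=\la \pi(g)d\pi(X_{j})d\pi(X_{k})\psi,\phi\ra$, while
$$ f_{\psi,\phi}(g\tau)-f_{\psi,\phi}(g)-\sum_{i=1}^{d}x_{i}(\tau)X_{i}f_{\psi,\phi}(g)=\la \pi(g)\Bigl(\pi(\tau)\psi-\psi-\sum_{i=1}^{d}x_{i}(\tau)d\pi(X_{i})\psi\Bigr),\phi\ra. $$
Substituting these into (\ref{hu}) and pulling $\pi(g)$ and the functional $\la\,\cdot\,,\phi\ra$ outside the finite sums and the integral, I obtain $\mathcal{L}f_{\psi,\phi}(g)=\la B_{\pi}\psi,\pi(g^{-1})\phi\ra$, where $B_{\pi}\psi$ is precisely the right--hand side of (\ref{LK1}). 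Comparing with (\ref{inflink}), and using that equality holds for all $\phi\in H_{\pi}$, yields $\mathcal{A}_{\pi}\psi=B_{\pi}\psi$.

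The step demanding the most care — the main obstacle — is showing that the integral term defines a vector in $H_{\pi}$ and that interchanging $\la\pi(g)\,\cdot\,,\phi\ra$ with $\int_{G}\cdots\,\nu(d\tau)$ is legitimate. This is exactly where analyticity of $\psi$ is indispensable: writing $\tau=\exp\bigl(\sum_{i}x_{i}(\tau)X_{i}\bigr)$ in canonical coordinates near $e$, so that $\pi(\tau)\psi=\exp\bigl(\sum_{i}x_{i}(\tau)d\pi(X_{i})\bigr)\psi$, a second--order Taylor expansion gives
$$ \Bigl\|\pi(\tau)\psi-\psi-\sum_{i=1}^{d}x_{i}(\tau)d\pi(X_{i})\psi\Bigr\|_{H_{\pi}}=O\!\left(\sum_{i=1}^{d}x_{i}(\tau)^{2}\right) $$
on a coordinate neighbourhood $U$, which is $\nu$--integrable by (\ref{Lmeasdef}); off $U$ the integrand has norm at most $2\|\psi\|+\sum_{i}|x_{i}(\tau)|\,\|d\pi(X_{i})\psi\|$ and $\nu(U^{c})<\infty$. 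These estimates give absolute convergence of the Bochner integral in $H_{\pi}$, which then licenses moving the bounded functional inside. I expect the only delicate point beyond this to be a uniform control of the Taylor remainder, which once more rests on $\psi$ being an analytic vector; granting that, the matching of terms against (\ref{hu}) is routine.
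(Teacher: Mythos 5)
Your argument is correct and is essentially the paper's own proof: the paper merely cites (\ref{hu}) and (\ref{inflink}) together with ``the same arguments as in Theorem 5.5.1 of \cite{App}'', and those arguments are precisely the computation of $\mathcal{L}f_{\psi,\phi}$ on matrix coefficients, the matching of terms, and the L\'{e}vy-measure/Taylor estimate on a canonical coordinate neighbourhood that you supply. Nothing is missing.
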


\begin{proof} This follows from (\ref{hu}) and (\ref{inflink}) by the same arguments as used in the proof of Theorem 5.5.1 in \cite{App}, p.145-6.

\end{proof}


\section{Convolution Semigroups on Semisimple Lie Groups and Riemannian Symmetric Pairs}

In this section we will assume that $G$ is a Lie group and that $K$ is a compact subgroup of $G$. Let ${\mathfrak k}$ denote the Lie algebra of $K$, then it is easy to see that $Xf = 0$ for all $X \in {\mathfrak k}, f \in C_{u}^{2}(G/K)$. We write the vector space direct sum $\g = {\mathfrak k} \oplus {\mathfrak k}^{\perp}$, and we choose the basis $\{X_{1}, \ldots, X_{d}\}$ of $\g$ so that $\{X_{1}, \ldots, X_{m}\}$ is a basis for ${\mathfrak k}^{\perp}$, and $\{X_{m+1}, \ldots, X_{d}\}$ is a basis for ${\mathfrak k}$.

If $G$ is semisimple, we have the {\it Iwasawa decomposition} at the Lie algebra level:
$$ \g = {\mathfrak k} \oplus \fa \oplus \fn,$$
where $\fa$ is abelian and $\fn$ is nilpotent. At the global level $G$ is diffeomorphic to $KAN$, where $A$ is abelian and $N$ is nilpotent, and we may write each $g \in G$ as
$$ g = u(g)\exp(A(g))n(g),$$
where $u(g) \in K, A(g) \in \fa$ and $n(g) \in N$ (see e.g. Chapter VI in \cite{Kn}). Any minimal parabolic subgroup of $G$ has a {\it Langlands decomposition} $MAN$ where $M$ is the centraliser of $A$ in $K$. The {\it principal series} of irreducible representations of $G$ are obtained from finite dimensional representations of $M$ by Mackey's theory of induced representations. We will say more about this below.

 A Gelfand pair $(G,K)$ is said to be a {\it Riemannian symmetric pair}, if $G$ is a connected Lie group and there exists an involutive analytic automorphism $\sigma$ of $G$ such that $(K_{\sigma})_{0} \subseteq K \subseteq K_{\sigma}$, where
 $$ K_{\sigma}: = \{k \in K; \sigma(k)= k\},$$
 and $(K_{\sigma})_{0}$ is the connected component of $e$ in $K_{\sigma}$.
 In this case, we always write ${\mathfrak p}: = {\mathfrak k}^{\perp}$, and note that
 $$ {\mathfrak p} = \{X \in \g; (d\sigma)_{e}(X) = - X\}.$$
 We also have that $N = G/K$ is a Riemannian symmetric space, under any $G$-invariant Riemannian metric on $N$, and if $\natural$ is the usual natural map from $G$ to $N$, then $(d\natural)_{e}: {\mathfrak p} \rightarrow T_{o}(X)$ is a linear isomorphism, where $o:=\natural(e)$. For details see e.g. Helgason \cite{Helg}, pp.209--10.

 If $G$ is semisimple, then we can find a Cartan involution $\theta$ of $\g$, so that $(d\sigma)_{e} = \theta$. In this case $K_{\sigma} = K$, and there is a natural Riemannian metric on $N$, that is induced by the inner product $B_{\theta}$ on $\g$, where for all $X,Y \in \g$,
 $$ B_{\theta}(X,Y) = -B(X, \theta(Y)),$$
 with $B$ being the Killing form on $\g$ (see e.g. \cite{Kn} pp.361--2).

 From now on in this section, we assume that $G$ is a noncompact, connected semisimple Lie group with finite centre, and that $K$ is a maximal compact subgroup. Then $G/K$ is a noncompact Riemannian symmetric space.
 We also assume that $G/K$ is irreducible, i.e. that the action of Ad$(K)$ on $\fp$  is irreducible. Write $g_{ij} = B(X_{i}, X_{j})$, for $i,j = 1, \ldots m$, and define the horizontal Laplacian in $G$ to be
 $$ \Delta_{H} = \sum_{i,j = 1}^{n}g_{ij}^{-1}X_{i}X_{j},$$ where $(g_{ij}^{-1})$ is the $(i,j)$th component of the inverse matrix to $(g_{ij})$.
 Then for all $f \in C_{u}^{2}(N)$, we have
 $$ \Delta_{H}(f \circ \natural) = \Delta f, $$
 where $\Delta$ is the Laplace-Beltrami operator on $N$. We also have that for each $\omega \in \mathcal{P}$, there exists $c_{\omega} > 0$ so that
 $$ \Delta_{H}\omega = -c_{\omega}\omega.$$
 It is shown in \cite{Appb} that if $(\mu_{t}, t \geq 0)$ is a $K$--bi--invariant continuous convolution semigroup, then for all $f \in C_{u}^{2}(G)$, (\ref{hu}) reduces to
 $$ \mathcal{L}f(\sigma) = a \Delta_{H}f(\sigma) + \int_{G}(f(\sigma \tau) - f(\sigma))\nu(d\tau),$$
 for all $\sigma \in G$, where $a \geq 0, \nu$ is a $K$--bi--invariant L\'{e}vy measure on $G$,  and the integral should be understood as a principal value. Then from (\ref{LK1}), we obtain Gangolli's L\'{e}vy-Khintchine formula (see also \cite{LW}), i.e. for all $t \geq 0, \omega \in \mathcal{P}$,
 $$ \widehat{\widetilde{\mu_{t}}}_{S}(\omega) = e^{-t \psi_{\omega}},$$
 where
 \bean  \psi_{\omega}: & = & \la \mathcal{A}_{\pi_{\omega}}u_{\omega}, u_{\omega} \ra \\
 & = & -a c_{\omega} + \int_{G}(\omega(g) - 1)\nu(dg). \eean

 In the remainder of this section, we will focus on more general L\'{e}vy-Khintchine formulae for standard convolution semigroups on semi-simple Lie groups.

 The spherical representations of $G$ are precisely the {\it spherical principal series}, which are obtained as follows. For each $\lambda \in \fa^{*}$, define a representation $\eta_{\lambda}$ of $M$ on $\C$ by
 $$ \eta_{\lambda}(man) = e^{-i\lambda(\xi)},$$
 where $m \in M, a = \exp(\xi) \in A, n \in N$. The required spherical representation $\pi_{\lambda}$ acting on $L^{2}(K)$ is obtained by applying the ``Mackey machine'' to $\eta_{\lambda}$. In fact, we have for each $g \in G, l \in K, f \in L^{2}(K)$,
 \begin{equation} \label{Mack}
(\xi_{\lambda}(g)f)(l) = e^{-(i\lambda - \rho)(A(lg))}f(u(lg),
\end{equation}
where $\rho$ is the celebrated half-sum of positive roots (see e.g. the Appendix to \cite{AD}), and we are using the notation $\xi_{\lambda}$ instead of $\pi_{\omega_{\lambda}}$, for a generic element of the spherical principal series.  

In this case we have $u_{\lambda}: = u_{\omega_{\lambda}} = 1$ in $L^{2}(K)$ and we obtain Harish-Chandra's beautiful formula for spherical functions:
\begin{eqnarray} \label{HCbeaut}
\omega_{\lambda}(g) & = & \la u_{\lambda}, \pi_{\lambda}(g) u_{\lambda} \ra \nonumber \\
& = & \int_{K} e^{(i\lambda + \rho)(A(kg))}dk,
\end{eqnarray}
for all $g \in G, \lambda \in \fa^{*}$. In particular, we may identify $\mathcal{P}$ with $\fa^{*}$.

For the general case, we explore the connection between the approach taken here, and the L\'{e}vy-Khintchine formula that was obtained in \cite{AD}. To that end, let $\widehat{K}$ be the unitary dual of $K$, i.e. the set of all equivalence classes (up to unitary equivalence) of irreducible representations of $K$. For each $\pi \in \widehat{K}$, let $V_{\pi}$ be the finite-dimensional inner product space on which $\pi(\cdot)$ acts, and write $d_{\pi} = \mbox{dim}(V_{\pi})$. For each $\pi_{1}, \pi_{2} \in \widehat{K}, \lambda \in \fa^{*},$ define the {\it generalised spherical function} $\Phi_{\lambda, \pi_{1}, \pi_{2}}$ by
\begin{equation} \label{supergen1}
\Phi_{\lambda,\pi_1, \pi_2}(g): = \sqrt{d_{\pi_{1}}d_{\pi_{2}}}\int_{K}e^{-(i\lambda - \rho)(A(kg))}(\pi_1(u(kg)) \otimes \overline{\pi_2}(k))dk,
\end{equation}
for all $g \in G$, where $\overline{\pi}$ denotes the conjugate representation associated to $\pi$. Hence $\Phi_{\lambda,\pi_1, \pi_2}(g)$ is a (bounded) linear operator on the space $V_{\pi_1} \otimes V_{\pi_2}^{*}$. The connection with principal series representations is made apparent in Theorem 3.1 of \cite{AD}, in that for all $g \in G, u_1, v_1 \in V_{\pi_1}, u_2, v_2 \in V_{\pi_2}$

\begin{equation} \label{keystruct}
 \la \Phi_{\lambda,\pi_1, \pi_2}(g)(u_1 \otimes u_2^*), v_1 \otimes v_2^* \ra_{V_{\pi_1} \otimes V_{\pi_2}^{*}} = \la \xi_{\lambda}(g)f_{\pi_{1}}^{u_{1},v_{1}}, f_{\pi_{2}}^{u_{2},v_{2}}) \ra_{L^{2}(K)},
\end{equation}

where for each $\pi \in \widehat{K}, u, v \in V_{\pi}, k \in K, f_{\pi}^{u,v}(k) : = \la \pi(k)u, v \ra$. Note that by Peter-Weyl theory, $\mathcal{M}(K):= \mbox{lin. span}\{f_{\pi}^{u,v}(k); \pi \in \widehat{K}, u, v \in V_{\pi}\}$ is dense in $L^{2}(K)$.

If $\mu$ is a finite measure defined on $(G, \mathcal{B}(G))$ then its {\it generalised spherical transform} is defined to be
$$ \widehat{\mu_{\lambda, \pi_{1}, \pi_{2}}}^{(S)}: = \int_{G}\Phi_{\lambda,\pi_1, \pi_2}(g^{-1})\mu(dg).$$
Then from (\ref{keystruct}), we easily deduce that

\begin{equation} \label{transrel}
\la \widehat{\mu_{\lambda, \pi_{1}, \pi_{2}}}^{(S)}(u_1 \otimes u_2^*), v_1 \otimes v_2^* \ra_{V_{\pi_1} \otimes V_{\pi_2}^{*}} = \la \widehat{\mu}(\xi_{\lambda})f_{\pi_{1}}^{u_{1},v_{1}}, f_{\pi_{2}}^{u_{2},v_{2}} \ra_{L^{2}(K)},
\end{equation}

Now replace $\mu$ by $\mu_{t}$ in (\ref{transrel}). In \cite{AD} a L\'{e}vy-Khinchine-type formula which extended Gangolli's result from \cite{Gang1} was obtained, wherein the role of the characteristic exponent was played by
$$ \eta_{\lambda, \pi_{1}, \pi_{2}}: = \frac{d}{dt}\left.\widehat{(\mu_{t})_{\lambda, \pi_{1}, \pi_{2}}}^{(S)}\right|_{t=0}.$$

Differentiating in (\ref{transrel}), we obtain
\begin{equation} \label{transre2}
\la \eta_{\lambda, \pi_{1}, \pi_{2}}(u_1 \otimes u_2^*), v_1 \otimes v_2^* \ra_{V_{\pi_1} \otimes V_{\pi_2}^{*}} = \la \mathcal{A}_{\xi_{\lambda}}f_{\pi_{1}}^{u_{1},v_{1}}, f_{\pi_{2}}^{u_{2},v_{2}} \ra_{L^{2}(K)},
\end{equation}

\noindent where we use the fact that $\mathcal{M}(K) \subseteq C^{\infty}(K) \subseteq \mbox{Dom}(\mathcal{A}_{\xi_{\lambda}})$, and from here we have a direct relationship between the L\'{e}vy-Khinchine-type formula given in Theorem 5.1 of \cite{AD}, and that of Theorem \ref{LK}.

\vspace{5pt}

In section 6 of \cite{AD} an attempt was made to use the generalised spherical transform to obtain a L\'{e}vy--Khintchine formula for right $K$--invariant convolution semigroups, in the mistaken belief that there were non--trivial elements in that class that were not $K$--bi--invariant. The work of \cite{L1}, as described in section 2 above, shows that this was erroneous.



\vspace{5pt}

\section{A New Class of Processes on Symmetric Spaces}

Let $(\Omega, \mathcal{F}, P)$ be probability space, and $(L(t), t \geq 0)$ be a right L\'{e}vy process on $G$ (so that it has stationary and independent left increments). Then the family of laws $(\mu_{t}, t \geq 0)$ is a (standard) convolution semigroup. We are interested in identifying classes of these processes so that the process $(\natural(L(t)), t \geq 0)$ on $N = G/K$, which is a Feller process (by Proposition 2.1 in \cite{Liao}, p.33), has interesting probabilistic and geometric properties. If $(\mu_{t}, t \geq 0)$ is $K$--bi--invariant, then $(\natural(L(t)), t \geq 0)$ is a L\'{e}vy process on $N$. Such processes were first investigated by Gangolli in \cite{Gang2} (see also \cite{Appb, LW}), and the generic process was shown to be a Brownian motion on $N$ interlaced with jumps having a $K$--bi--invariant distribution. We have seen that requiring that $(\mu_{t}, t \geq 0)$ is only right $K$--invariant gives us nothing new.

We begin with a \cadlag~\LP~ $(Z(t), t \geq 0)$ taking values on $\R^{m}$, where for each $t \geq 0, Z(t) = (Z_{1}(t), \ldots, Z_{m}(t))$, and having characteristics $(b, a,\nu)$. 
Assume that $G$ is semisimple and consider the global Cartan decomposition $G = \exp(\fp)K$. We induce a L\'{e}vy process $(\tilde{Z}(t), t \geq 0)$ on $\fp$ by defining $\tilde{Z}(t) = \sum_{i=1}^{m}Z_{i}(t)X_{i}$. As is shown in \cite{AK}, Corollary to Theorem 2.4, we obtain a left L\'{e}vy process $(M(t), t \geq 0)$ on $G$ by solving the stochastic differential equation (using the Markus canonical form $\diamond$):
$$ dM(t) = M(t-) \diamond d\tilde{Z}(t),$$
with initial condition $M(0) = e$ (a.s.).

The generator  takes the form

\bean
\mathcal{L}f(\sigma) & = & \sum_{i=1}^{m}b^{i}X_{i}f(\sigma) +
\sum_{i,j=1}^{m}a^{ij}X_{i}X_{j}f(\sigma) \nonumber \\
 &  + & \int_{\R^{m}}\left[f\left(\sigma \exp\left(\sum_{i=1}^{m}y^{i}X_{i}\right)\right)\right.\\ & - & \left.f(\sigma) -
  {\bf 1}_{B_{1}}(y)\sum_{i=1}^{m}y^{i}X_{i}f(\sigma)\right]\nu(dy),\eean

where $f \in C_{u}^{2}(G), \sigma \in G$. We then take $L(t) = M(t)^{-1}$ for all $t \geq 0$, to get the desired right L\'{e}vy process.


\vspace{5pt}

{\bf Example 1} Geodesics.

\vspace{5pt}

Here the process $Z$ has characteristics $(b, 0, 0)$. Fix $Y = \sum_{i=1}^{m}b_{i}X_{i} \in \fp$, and consider the deterministic L\'{e}vy process $L(t) = \exp(tY)$ for $t \geq 0$. Then the operator $\mathcal{L} = Y$ and $\natural(L(t)) = \mbox{Exp}(t d\natural(Y))o$, where Exp is the Riemannian exponential; i.e. $\natural(L(t))$ moves from $o$ along the unique geodesic having slope $d\natural(Y) \in T_{o}(N)$ at time zero.

\vspace{5pt}

{\bf Example 2} Compound Poisson Process with Geodesic Jumps.

\vspace{5pt}

Let $(W_{n}, \nN)$ be a sequence of independent, identically distributed random variables, taking values in $\fp$, and having common law $\eta$, and let $(N(t), t \geq 0)$ be a Poisson process of intensity $1$ that is independent of all the $W_{n}$'s. Consider the L\'{e}vy process defined for $t > 0$ by
$$ L(t) = \exp(W_{N(t)})\exp(W_{N(t)-1}) \cdots \exp(W_{1}),$$

The law of $L(t)$ is $\mu_{t} = e^{-t}\delta_{e} + \sum_{n=1}^{\infty}\frac{t^{n}}{n!}\eta^{*(n)}$. Then

$$ \natural(L(t)) = \mbox{Exp}(d\natural(W_{N(t)}))\circ \mbox{Exp}(d\natural(W_{N(t)-1})) \circ \cdots \circ \mbox{Exp}(d\natural(W_{1}))o,$$

describes a process which jumps along random geodesic segments. Here we slightly abuse notation so that for $X, Y \in \fp$, we write $\mbox{Exp}(d\natural(X)) \circ \mbox{Exp}(d\natural(Y))o$ for the geodesic that moves from time zero to time one, starting at the point $q = \mbox{Exp}(d\natural(Y))o$, and having slope $d\tau_{g} \circ d\natural(X)$, where $g$ is the unique element of $G$ such that $q = \tau_{g}(o):=gK$.

In this case, $(P(t), t \geq 0)$ has a bounded generator,

$$ \mathcal{L}f(\sigma) = \int_{g}(f(\sigma\exp(Y)) - f(\sigma))\tilde{\eta}(dY),$$

\noindent for $f \in C_{u}(G), \sigma \in G$. The \LP~$Z$ has characteristics $(b^{\prime}, 0, \eta^{\prime})$. Here $\eta^{\prime}:= \tilde{\eta}  \circ T$ and  $b^{\prime}_{i} = \int_{|y| < 1}y^{i}\eta^{\prime}(dy)$, where
$T$ is the vector space isomorphism between $\R^{m}$ and $\g$, which maps each element $e_{i}$ of the natural basis in $\R^{m}$ to $X_{i} (i= 1, \ldots, m)$.

\vspace{5pt}

More examples can be constructed from (1) and (2) by interlacing. These extend the results of \cite{AE} (within the symmetric space context). They can also be seen as a special case of the construction in \cite{Appold}. In is anticipated that the ideas in this section will be further developed in future work.

\vspace{5pt}

\noindent {\bf Acknowledgement}. I would like to thank Ming Liao for very helpful comments, and also the referee for some useful suggestions.

\bibliographystyle{amsplain}

\begin{thebibliography}{99}

\bibitem{Appold} Applebaum, D.: A horizontal L\'{e}vy process on the bundle of orthonormal frames over a complete Riemannian manifold, S\'{e}minaire de Probabilit\'{e}s XXIX,
	     Lecture Notes in Math, vol {\bf 1613}, (1995) 166--81 Springer-Verlag Berlin Heidelberg,
	
\bibitem{Appb} Applebaum, D.: Compound Poisson processes and L\'{e}vy processes in groups and symmetric spaces, {\it J.Theor. Prob.}
{\bf 13}, (2000) 383--425

\bibitem{Appa} Applebaum, D.: Operator-valued stochastic differential equations arising from unitary group representations, {\it J. Theor. Prob.} {\bf 14} (2001) 61--76

\bibitem{App} Applebaum,D.: {\it Probability on Compact Lie Groups}, Springer International Publishing (2014)

\bibitem{AD} Applebaum,D., Dooley,A.H.: A generalised Gangolli-Levy-Khintchine formula for infinitely divisible measures and L\'{e}vy processes on
semi-simple Lie groups and symmetric spaces, {\it Annales Institut Henri Poincar\'{e} (Prob.Stat.)}, {\bf 51} (2015) 599--619

\bibitem{AE} Applebaum,D., Estrade, A.: Isotropic L\'{e}vy processes on Riemannian
manifolds, {\it Ann. Prob.} {\bf 28} (2000) 166-84

\bibitem{AK} Applebaum,D., Kunita,H.:  L\'{e}vy flows on manifolds and L\'{e}vy processes on Lie groups,
{\it J.Math Kyoto Univ} {\bf 33} (1993) 1103--23

\bibitem{Berg} Berg,C.: Dirichlet forms on symmetric spaces, {\it Ann. Inst. Fourier, Grenoble} {\bf 23} (1973) 135--56

\bibitem{Dieu} Dieudonn\'{e},J.: {\it Treatise on Analysis, Volume VI}, Academic Press, Inc. (1978)

\bibitem{Fra} Franz, U.: L\'{e}vy processes on quantum groups and dual groups, in Quantum Independent Increment Processes II, Lecture Notes in Math. vol {\bf 1866} (2006) 161--259 Springer Berlin, Heidelberg

\bibitem{Gang1} Gangolli,R.: Isotropic infinitely divisible measures on symmetric spaces, {\it Acta Math.} {\bf 111} (1964) 213--46

\bibitem{Gang2} Gangolli,R.: Sample functions of certain differential processes on symmetric spaces, {\it Pacific J. Math} {\bf 15} (1965) 477--96

\bibitem{Helg} Helgason, H.: {\it Differential Geometry, Lie Groups and Symmetric Spaces}, Academic Press (1978), reprinted with corrections American Mathematical Society (2001)

\bibitem{Hey} Heyer,H.: Infinitely divisible probability measures on compact groups, in Lectures on Operator Algebras, Lecture Notes in Math. vol {\bf 247} (1972) 55--249 Springer Berlin, Heidelberg, New York

\bibitem{He1} Heyer,H.: {\it Probability Measures on Locally Compact Groups}, Springer-Verlag,
Berlin-Heidelberg (1977)

\bibitem{Hey1} Heyer,H.: Convolution semigroups of probability measures on Gelfand pairs, {\it Expo. Math.} {\bf 1} (1983) 3--45

\bibitem{Hunt} Hunt,G.A.: Semigroups of measures on Lie groups, {\it Trans. Amer. Math. Soc.} {\bf 81} (1956) 264--93

\bibitem{Kn} Knapp,A.W.: {\it Lie Groups Beyond an Introduction} (second edition), Birkh\"{a}user (1996, 2002)

\bibitem{Liao} Liao,M.: {\it L\'{e}vy Processes in Lie Groups}, Cambridge University Press (2004)

\bibitem{L1} Liao,M.: Convolution of probability measures on Lie groups and homogeneous spaces, {\it Potential Analysis} {\bf 43} (2015) 707--15

\bibitem{LW} Liao,M., Wang,L.: L\'{e}vy-Khinchine formula and existence of densities for convolution semigroups on symmetric spaces, {\it Potential Analysis} {\bf 27} (2007) 133--50

\bibitem{Wol} Wolf,J.A.: {\it Harmonic Analysis on Commutative Spaces}, Amer. Math. Soc. (2007)

\end{thebibliography}

\end{document}